\theoremstyle{plain}
\newtheorem{thm}{Theorem}[section]
\newtheorem{lem}[thm]{Lemma}
\newtheorem*{prob*}{Problem}
\theoremstyle{definition}
\newtheorem{rem}[thm]{Remark}
\newcommand{\sm}{\ensuremath{\smallsetminus}}
\newcommand{\inv}{\ensuremath{^{-1}}}
\newcommand{\Aut}{\textnormal{Aut}}
\newcommand{\sub}{\subseteq}
\def\td{tree-decom\-po\-si\-tion}
\def\ta{tree amalgamation}
\def\qt{quasi-tran\-si\-tive}
\def\qi{quasi-iso\-metric}
\def\qiy{quasi-iso\-me\-try}
\def\qis{quasi-iso\-me\-tries}
\def\ble{bilipschitz equivalent}
\def\blec{bilipschitz equivalence}
\def\PWhom{unvarying}
\newcommand{\comment}[1]{}
\newcommand{\nat}{{\mathbb N}}
\def\?#1{\vadjust{\vbox to 0pt{\vss\vskip-8pt\leftline{%
     \llap{\hbox{\vbox{\pretolerance=-1
     \doublehyphendemerits=0\finalhyphendemerits=0
     \hsize16truemm\tolerance=10000\small
     \lineskip=0pt\lineskiplimit=0pt
     \rightskip=0pt plus16truemm\baselineskip8pt\noindent
     \hskip0pt        
     #1\endgraf}\hskip7truemm}}}\vss}}}
\newenvironment{txteq*}
  {
    \begin{equation*}
    \begin{minipage}[c]{0.85\textwidth} 
    \em                                
  }
  {\end{minipage}\end{equation*}\ignorespacesafterend}
\begin{document}

\title{Tree amalgamations and quasi-isometries}
\author{Matthias Hamann}
\thanks{Supported by the Heisenberg-Programme of the Deutsche Forschungsgemeinschaft (DFG Grant HA 8257/1-1).}
\address{Matthias Hamann, Alfr\'ed R\'enyi Institute of Mathematics, Hungarian Academy of Sciences, Budapest, Hungary}
\date{}

\begin{abstract}
We investigate the connections between \ta s and \qis.
In particular, we prove that the quasi-isometry type of multi-ended accessible \qt\ connected locally finite graphs is determined by the quasi-isometry type of their one-ended factors in any of their terminal factorisations.
Our results carry over theorems of Papsoglu and Whyte on \qis\ between multi-ended groups to those between multi-ended graphs.
In the end, we discuss the impact of our results to a question of Woess.
\end{abstract}

\maketitle

\section{Introduction}\label{sec_Intro}

Recently, the following theorem was proved in~\cite{HLMR}, which says that every multi-ended locally finite connected \qt\ graph splits over a finite subgraph as a \ta\ and which is a graph theoretic version for Stallings' splitting theorem of multi-ended groups, see~\cite{Stallings}.
(We refer to Section~\ref{sec_Prelim} for the definitions.)

\begin{thm}\label{thm_HLMR_Splitting}\cite[Theorem 5.3]{HLMR}
Every connected \qt\ locally finite\linebreak graph with more than one end is a non-trivial tree amalgamation with respect to the group actions of finite adhesion and finite identification length of two connected \qt\ locally finite graphs.\qed
\end{thm}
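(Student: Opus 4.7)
The plan is to construct a \ta\ structure on $G$ from an $\Aut(G)$-invariant \td\ of finite adhesion that separates two ends. Since $G$ is \qt, locally finite, and multi-ended, there is a finite separator cutting off distinct ends; standard structure-tree machinery --- the Dunwoody--Kr\"on cut system, or its refinement via edge tangles --- then produces an $\Aut(G)$-invariant nested family of finite separators, giving a \td\ $(T,\cV)$ whose adhesion is finite and in which $T$ has more than one end. Moreover $\Aut(G)$ acts quasi-transitively both on $V(G)$ and on $V(T)$, and the induced action on $T$ has only finitely many orbits of edges.

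From this \td\ I would pass to the bipartite shape needed for a \ta: a bipartite tree on which $\Aut(G)$ has exactly two orbits of vertices and one orbit of edges. To achieve this, subdivide $T$ barycentrically so that new vertices correspond to adhesion sets, and then group orbits on $V(T)$ --- collapsing or amalgamating adjacent orbits along shared adhesion sets when necessary --- until only two vertex orbits remain. The two orbit representatives become the factor graphs $G_1$ and $G_2$, each carrying the induced action of its part stabiliser, and each locally finite as an induced subgraph of $G$. The amalgamation is then defined by the inclusion of each adhesion set into its two incident parts, with finite adhesion inherited directly from the \td, and with the amalgamation being non-trivial because $T$ has more than one edge.

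The main obstacle is to ensure simultaneously that (i)~the factor graphs $G_1$ and $G_2$ are themselves \qt\ and (ii)~the identification length is finite, i.e.\ in each $G_i$ only finitely many identified copies of each adhesion set appear. Both conditions can fail after a careless orbit-collapsing step: merging two vertex orbits whose stabilisers act incompatibly on their adhesion sets destroys quasi-transitivity of the factor, while a refinement that creates too many adhesion sets in a single part makes the identification length infinite. I would therefore insist that the \td\ be chosen minimally --- for instance starting from a nested system of \emph{tight} finite separators minimising the separator size among those separating two fixed ends --- so that edge stabilisers in $T$ act with finitely many orbits on their adhesion sets, making the identification length finite. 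The rest is combinatorial bookkeeping to align these choices with the bipartite refinement and to verify both conditions on $G_1$ and $G_2$.
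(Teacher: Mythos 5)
First, a point of order: the paper does not prove this statement at all --- it is Theorem~5.3 of~\cite{HLMR}, quoted here as an external input, so there is no internal proof to compare against. Judged on its own merits, your sketch does follow the broad strategy of the actual proof in~\cite{HLMR} (an $\Aut(G)$-invariant nested system of finite separators distinguishing ends, hence an invariant \td\ of finite adhesion, converted into a \ta), but the parts you defer to ``combinatorial bookkeeping'' are precisely where the content of the theorem lies, and one of your intermediate moves is misdirected. The step ``group orbits on $V(T)$, collapsing or amalgamating adjacent orbits until only two vertex orbits remain'' is not how one reaches the semiregular bipartite shape, and it is not clear it can be carried out: merging adjacent parts of a \td\ changes the parts themselves, and there is no reason the merged parts stay \qt\ under their stabilisers or that the process terminates with exactly two orbits. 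The standard route is the opposite one: select a \emph{single} $\Aut(G)$-orbit of separators from the nested system and build the \td\ from that orbit alone; this automatically yields one orbit of edges of $T$ and hence at most two orbits of parts, which is what makes the amalgamating tree semiregular.

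Beyond that, three things you treat as routine are genuine theorems. (1)~Quasi-transitivity of each factor: you need that the setwise stabiliser of a part acts with finitely many orbits \emph{on that part}; this does not follow from $\Aut(G)$ acting quasi-transitively on $G$ and on $V(T)$ without an argument (it uses that each vertex of $G$ lies in only finitely many parts, so that orbits of $G$ cannot be spread over infinitely many orbits within a single part). (2)~Connectedness of the factors: parts of a \td\ need not induce connected subgraphs, and the construction has to be adjusted (e.g.\ by taking connected separators or adding edges inside parts) to guarantee it. (3)~Finite identification length: this requires that each vertex of $G$ lies in the parts of a subtree of $T$ of bounded diameter; choosing ``tight'' minimal separators does not by itself deliver this, and your own formulation (``only finitely many identified copies of each adhesion set appear'') conflates identification length with finite adhesion. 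Finally, non-triviality does not follow merely from $T$ having more than one edge --- you need that every chosen separator genuinely has vertices of $G$ on both sides, which is where the hypothesis of more than one end must actually be used. As it stands the proposal is a plausible roadmap, not a proof.
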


Just like Stallings' theorem enables us to prove theorems about multi-ended groups having knowledge of their factors, we are aiming for similar results for graphs.
Two such example were already proved in~\cite{HLMR}: \ta s respect hyperbolicity, i.\,e.\ two locally finite \qt\ graphs are hyperbolic if and only if their \ta\ is hyperbolic, see~\cite[Theorem 7.4]{HLMR}, and a connected locally finite \qt\ graph has only thin ends if and only if it has a terminal factorisation in finite graphs, see~\cite[Theorem 7.10]{HLMR}.
Here, a terminal factorisation can be seen as analogue of a terminal graph of groups: whenever we split our multi-ended graphs, we may ask if their factors have more than one end and apply to those our splitting theorem, too.
If we iterate this splitting and eventually have only factors with at most one end, the \emph{terminal factorisation} of the original graph consists of these finite or one-ended factors.
Kr\"on and M\"oller~\cite[Theorem 5.5]{KM-QuasiIsometries} proved that a connected locally finite \qt\ graph has only thin ends if and only if it is \qi\ to a tree.
Thus, the result of~\cite[Section 7.2]{HLMR} is the following.

\begin{thm}\label{thm_HLMR_ThinEnds}
	A connected \qt\ locally finite graph is \qi\ to a tree if and only if it has a terminal factorisation of only finite graphs.\qed
\end{thm}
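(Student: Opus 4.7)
The plan is to observe that Theorem~\ref{thm_HLMR_ThinEnds} is not a new assertion requiring a substantive argument, but rather an immediate consequence of two existing characterisations that are already quoted in the surrounding text. Concretely, the strategy is to chain two biconditionals.

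First, I would invoke the theorem of Kr\"on and M\"oller~\cite[Theorem 5.5]{KM-QuasiIsometries}: a connected \qt\ locally finite graph $G$ is \qi\ to a tree if and only if every end of $G$ is thin. This is quoted verbatim in the paragraph preceding the statement, so no further work is needed on that side.

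Second, I would invoke~\cite[Theorem 7.10]{HLMR}: a connected \qt\ locally finite graph $G$ has only thin ends if and only if it admits a terminal factorisation all of whose factors are finite graphs. Again, this is already recorded in the text as the second characterisation obtained by iterating Theorem~\ref{thm_HLMR_Splitting} until the splitting process terminates.

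Having both equivalences with ``every end of $G$ is thin'' as the common middle condition, the proof is simply the concatenation: $G$ is \qi\ to a tree $\Lra$ every end of $G$ is thin $\Lra$ $G$ has a terminal factorisation of only finite graphs. There is no real obstacle here; the only thing to verify is that the notion of ``thin end'' and the ambient class (connected \qt\ locally finite graphs) coincide in the two cited results, which they do. The entire proof in the paper should therefore be one or two lines citing these two references, and that is what I would write.
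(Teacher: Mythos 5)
Your proposal is correct and matches the paper exactly: the theorem is stated with a terminal \qed\ precisely because it is the immediate concatenation of Kr\"on--M\"oller's equivalence (\qi\ to a tree $\Leftrightarrow$ all ends thin) with \cite[Theorem 7.10]{HLMR} (all ends thin $\Leftrightarrow$ terminal factorisation into finite graphs), both of which are quoted in the paragraph preceding the statement. No further argument is given or needed.
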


In this paper we are looking more into the connections of \ta s with \qis.
Since \ta s of two graphs differ if we choose different adhesion sets (i.\,e.\ if the overlap of the two graphs differ in its size or its position within the graphs), the first natural question is whether the \qiy\ type of a \ta\ depends on these choices.
Our first theorem proves that (under some mild assumptions) it does not depend on these choices.

\begin{thm}\label{thm_PW0.2Graph}
Let $G_1$ and $G_2$ be infinite connected locally finite \qt\ graphs and $G$ and~$H$ be two non-trivial \ta s of~$G_1$ and~$G_2$ both having finite adhesion and finite identification.
Assume that some group of automorphisms of~$G_i$ acts \qt ly on~$G_i$ and on its adhesion sets for $i=1,2$.
Then $G$ is \qi\ to~$H$.
\end{thm}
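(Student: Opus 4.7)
The plan is to construct an explicit \qiy\ $f\colon G\to H$ by matching up the factor structures of the two tree amalgamations, using the \qt\ action on adhesion sets to reconcile the different choices of overlap.

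First I would pass to the underlying structure trees $T_G$ and $T_H$ of the two tree amalgamations. Each vertex of $T_G$ carries a copy of $G_1$ or of $G_2$ sitting inside~$G$ (isometrically up to a bounded error coming from the finite identification length), and adjacent vertices correspond to copies sharing an adhesion set. Both $T_G$ and $T_H$ are infinite (the amalgamations are non-trivial and the~$G_i$ are infinite), bipartite between type-$1$ and type-$2$ vertices, and by the \qt\ hypothesis the vertex degrees of each type take only finitely many values. A standard tree argument then yields a bipartition-preserving \qiy\ $\varphi\colon T_G\to T_H$, since infinite bipartite trees with bounded-from-above and bounded-from-below degrees of each type lie in a single \qiy\ class once the degrees are large enough to force infinitely many ends.

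Second, I would invoke the hypothesis that some automorphism group of~$G_i$ acts \qt ly on~$G_i$ and on its adhesion sets to obtain, for each vertex $v\in V(T_G)$ of type~$i$, a \qiy\ $G_i\to G_i$ with constants independent of~$v$ that sends the adhesion sets labelling the edges incident to~$v$ (inside $G$) to within bounded Hausdorff distance of the adhesion sets labelling the edges incident to $\varphi(v)$ (inside $H$). This is precisely where the hypothesis on the group action enters: different choices of adhesion set inside a single factor are coarsely equivalent under self-\qis\ of that factor.

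Third, I would glue these local \qis\ together along~$\varphi$: map the copy $C_v$ of $G_i$ sitting at~$v$ in $G$ to the copy $C_{\varphi(v)}$ of~$G_i$ sitting at $\varphi(v)$ in~$H$ via the \qiy\ from the previous step. Since the local maps agree up to bounded error on adhesion sets, picking a consistent representative on each adhesion set patches them into a single map $f\colon G\to H$. To check that $f$ is a \qiy, I would use the fact, intrinsic to \ta s with finite adhesion and finite identification length, that the graph metric decomposes up to bounded additive error as a sum of internal distances within the factors traversed by a geodesic plus the tree distance in~$T_G$; the analogous decomposition in~$H$ together with the fact that~$\varphi$ is a \qiy\ then implies that $f$ distorts distances by at most a multiplicative and additive constant, and a symmetric argument on $H\to G$ supplies the coarse surjectivity.

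The main obstacle I expect is the compatibility of the patching at the adhesion sets: the quasi-isometry $\varphi$ need not carry edges to edges, so the neighbours $C_v,C_{v'}$ of an edge $vv'\in T_G$ may be sent to copies at positive $T_H$-distance, threatening to break the adhesion matching. I would handle this either by first reducing both $T_G$ and $T_H$ to a common regular model tree (using the bounded-degree bipartite structure) along which the local \qis\ propagate by bounded-depth compositions, or by exploiting that within one copy of~$G_i$ any two adhesion sets lie in only finitely many orbits, so that the error in bridging a short $T_H$-path between $\varphi(C_v)$ and $\varphi(C_{v'})$ is uniformly controlled. A secondary delicate point is that a single copy of~$G_i$ may carry infinitely many adhesion sets; finite adhesion and finite identification length ensure that the error across each contributes only a bounded additive constant to~$f$, so the estimates survive passing to the whole graph.
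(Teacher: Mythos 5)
Your strategy---build a \qiy\ of the structure trees and then glue uniform local \qis\ of the factors along it---has a genuine gap at its central step, and it is exactly the point that the paper's argument (the reduction to Theorem~\ref{thm_PW_0.1} and the Papasoglu--Whyte lemmas) is designed to get around. The two \ta s $G=G_1\ast_T G_2$ and $H=G_1\ast_{T'}G_2$ may use completely unrelated families of adhesion sets inside $G_1$: they may differ in size, in position, in coarse density inside $G_1$, and in how many of them meet a single copy of $G_1$. (In the present setting the degrees of $T$ and $T'$ are in fact all infinite, so the trees are not even locally finite and your ``degrees of each type take only finitely many values'' picture does not apply.) The hypothesis that some group acts \qt ly on $G_i$ and on its adhesion sets is a statement about each amalgamation separately; it lets you move any one adhesion set of~$G$ to any other adhesion set of~$G$ by a uniformly bilipschitz self-map, but it provides no map carrying the whole family of $G$-adhesion sets in a copy of~$G_1$ onto the whole family of $H$-adhesion sets in a copy of~$G_1$. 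When the two families have different cardinalities or different densities (say every vertex of~$G_1$ is an attaching point in~$G$ but only a sparse orbit is in~$H$), no single uniformly bilipschitz self-map of~$G_1$ can match them up even coarsely, so your second step fails and the gluing cannot be carried out. Relatedly, the tree \qiy\ $\varphi\colon T_G\to T_H$ carries no metric information about $G$ and~$H$ (collapsing infinite factors to tree vertices is far from a \qiy), so even granting the local matching, your distance estimate would still have to control how far apart consecutive adhesion sets sit \emph{inside} each factor along a geodesic---which is precisely the data that differs between $G$ and~$H$.

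The missing idea is the Papasoglu--Whyte rearrangement. One first normalises so that the adhesion sets are singletons, pairwise disjoint, and cover each factor (Lemma~\ref{lem_AdhesionGood}); Lemma~\ref{lem_PW_1.1} then shows that \emph{any} attaching pattern in which every vertex carries exactly one outgoing edge is \ble\ to the based amalgamation $G_1+_{u,v}G_2$, irrespective of where the edges actually attach; and Lemmas~\ref{lem_PW_1.3} and~\ref{lem_PW_1.4}, together with the $A$/$B$ construction in the proof of Theorem~\ref{thm_PW_0.1}, absorb the remaining discrepancy between the two patterns. This is a global redistribution of where the copies of one factor hang off the other, not a family of local self-maps of the factors, and it is what makes independence from the choice of adhesion sets true.
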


The result stays essentially the same if we take the second \ta\ as a \ta\ of two graphs, one of which is \qi\ to~$G_1$ the other to~$G_2$.
More generally, our next result says that even iterated \ta s only depend on the \qiy\ type of infinite factors.

\begin{thm}\label{thm_PW0.3Graph}
Let $G$ and $H$ be locally finite quasi-transitive graphs with infinitely many ends and let $(G_1,\ldots,G_n),(H_1,\ldots,H_m)$ be factorisations of $G,H$, respectively.
If $(G_1,\ldots,G_n)$ and $(H_1,\ldots,H_m)$ have the same set of quasi-isometry types of infinite factors, then $G$ and $H$ are quasi-isometric.
\end{thm}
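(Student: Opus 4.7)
The plan is to reduce this iterated statement to the single-level Theorem~\ref{thm_PW0.2Graph} (in the strengthened form mentioned immediately after its statement, which allows each factor to be replaced by any quasi-isometric one) by induction on $n+m$. Before the induction begins I would pass to terminal factorisations: if any factor $G_i$ is multi-ended, apply Theorem~\ref{thm_HLMR_Splitting} to $G_i$ and replace it in the list by its own factors, then iterate. A small subtlety arises because refining a multi-ended factor $G_i$ replaces its \qiy\ type in the hypothesis by those of its one-ended pieces; this can be handled by first proving the theorem in the terminal case and then arguing separately that any factorisation refines to a terminal one of the same graph, so that the hypothesis transfers.

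Assuming both factorisations are terminal, the base case $n=m=2$ splits into three sub-cases. If all four factors are one-ended, the extended Theorem~\ref{thm_PW0.2Graph} gives the conclusion directly. If all four are finite, Theorem~\ref{thm_HLMR_ThinEnds} identifies each of $G$ and $H$ with a tree, and since both have infinitely many ends one concludes that $G$ is \qi\ to $H$ via reduction to regular trees of valence at least~$3$. The mixed case---one infinite and one finite factor on each side---requires a strengthening of Theorem~\ref{thm_PW0.2Graph} allowing a finite factor to vary freely, since each bounded copy of the finite factor merely thickens the backbone tree. For the inductive step with $n+m>4$, I would choose from the factorisation tree of $G$ a top-level splitting $G=G'\ast G''$ and produce a matching top-level splitting $H=H'\ast H''$ in which the multisets of infinite-factor \qiy\ types of $(G',G'')$ and $(H',H'')$ agree on each side; this relies on the freedom in Theorem~\ref{thm_HLMR_Splitting} to choose the splitting (and to move one-ended factors between the two sides) together with the robustness from Theorem~\ref{thm_PW0.2Graph}. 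The inductive hypothesis then gives that $G'$ is \qi\ to $H'$ and $G''$ is \qi\ to $H''$, and the extended Theorem~\ref{thm_PW0.2Graph} concludes that $G$ is \qi\ to $H$.

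The main obstacle I foresee is the mixed finite/infinite case together with the reshuffling step: one must show that an iterated tree amalgamation is flexible enough that its \qiy\ type depends only on the multiset of \qiy\ types of its infinite factors, independently of the shape of the factorisation tree and of where the finite factors are attached. Making this precise, and reducing it cleanly to the already proven single-level Theorem~\ref{thm_PW0.2Graph}, is where the real technical work will lie.
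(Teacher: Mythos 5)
Your overall shape (induct, and feed everything through the single-level replacement result) matches the spirit of the paper, but two of your reductions have genuine gaps. First, the reduction to terminal factorisations does not work. A factor $G_i$ with more than one end need not be accessible, so it need not admit a terminal factorisation at all, and the paper stresses that only accessible graphs have one. Worse, even when refinements exist, the hypothesis does not transfer: the theorem assumes only that the two \emph{given} factorisations have the same set of \qiy\ types of infinite factors, and after refining a multi-ended factor this set changes in a way you cannot control (two \qi\ multi-ended factors can a priori split into non-matching pieces -- establishing that they cannot, for accessible graphs, is essentially Theorem~\ref{thm_PW0.4Graph}, which is proved \emph{after} and \emph{from} this theorem). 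The paper instead inducts on the number of \qiy\ types of infinite factors and handles multi-ended factors in place, splitting them with Theorem~\ref{thm_HLMR_Splitting} only inside the base case and immediately re-absorbing the pieces.

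Second, your inductive step asks for top-level splittings $G=G'\ast G''$, $H=H'\ast H''$ whose \emph{multisets} of infinite-factor \qiy\ types agree. The hypothesis only gives equality of \emph{sets}: $n$ and $m$ may differ and multiplicities may differ arbitrarily, so no such matching exists in general, and Theorem~\ref{thm_HLMR_Splitting} gives no freedom to rearrange a factorisation tree. The indispensable ingredient you are missing is the absorption machinery of Papasoglu--Whyte (Lemmas~\ref{lem_PW_1.1}, \ref{lem_PW_1.3} and~\ref{lem_PW_1.4}), which lets one collapse repeated factors ($A\ast H\ast H\simeq_{qi}A\ast H$, $H\ast\ldots\ast H\simeq_{qi}H\ast H$) and, together with Lemma~\ref{lem_taOneFactorFinite}/Lemma~\ref{lem_taFinInfin}, eliminate finite factors by converting $G_{\rm fin}\ast H$ into $H\ast\ldots\ast H$. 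This is exactly the point you defer as ``where the real technical work will lie''; it is not an afterthought but the core of the proof, and without it neither the set-versus-multiset discrepancy nor your mixed finite/infinite base case can be closed. The paper's Theorem~\ref{thm_PW_0.1} (the strengthened one-factor replacement) is proved with these same lemmas and then drives the induction in the technical Theorem~\ref{thm_PW0.3GraphTechnical}, whose three-way case distinction (infinitely many ends, needs a finite companion, or tree-like) is also needed because the product of one representative per type may fail to have infinitely many ends.
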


An obvious question is whether we can also obtain a reverse statement of Theorem~\ref{thm_PW0.3Graph}, i.\,e.\ if $G$ and~$H$ are \qi\ graphs, do all factorisations of~$G$ and~$H$ have the same set of quasi-isometry types of infinite parts?
In general this is false as an easy example shows: take a tree; this has a factorisation into finite graphs, but also the trivial factorisation into just itself.

In this example, we can still factorise the tree while this is not possible for the finite graphs.
So we might just ask for a reverse of Theorem~\ref{thm_PW0.3Graph} for terminal factorisations.
But not all connected locally finite \qt\ graphs have a terminal factorisation.
Those that do have one are the accessible connected locally finite \qt\ graphs, as was shown in~\cite{HLMR} and this is indeed a class of graphs where we obtain the reverse of Theorem~\ref{thm_PW0.3Graph} for terminal factorisation, as the following theorem shows.
(See Section~\ref{sec_Prelim} for the definition of accessibility.)

\begin{thm}
\label{thm_PW0.4Graph}
Let $G$ be an accessible locally finite quasi-transitive graph.
A locally finite quasi-transitive graph $H$ is quasi-isometric to~$G$ if and only if it satisfies the following conditions:
\begin{enumerate}[\rm (i)]
\item $H$ has the same number of ends as~$G$;
\item $H$ is accessible; and
\item any terminal factorisation of $H$ has the same set of quasi-isometry types of one-ended factors as any terminal factorisation of~$G$.
\end{enumerate}
\end{thm}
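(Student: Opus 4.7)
The plan is to treat the two directions separately. The ``if'' direction reduces quickly to the earlier theorems; the substantive work lies in establishing, for the ``only if'' direction, that the set of \qiy\ types of one-ended factors of a terminal factorisation is a \qiy\ invariant.

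For the ``if'' direction I would split by the number of ends of~$G$. If $G$ has at most one end, then by~(i) so does~$H$ and both coincide with their own trivial terminal factorisation; condition~(iii) then directly yields that $G$ is \qi\ to~$H$ (in the finite case both sides are empty and both graphs are finite). If $G$ has exactly two ends, all its ends are thin, so Theorem~\ref{thm_HLMR_ThinEnds} gives that $G$ is \qi\ to a tree, and a two-ended locally finite \qt\ graph is \qi\ to~$\Z$; applying the same reasoning to~$H$ via (i) and (ii) yields $H$ \qi\ to~$\Z$, hence to~$G$. The main case is infinitely many ends: then $H$ also has infinitely many ends by (i) and is accessible by (ii), so each of $G,H$ admits a terminal factorisation, and since the one-ended factors in a terminal factorisation are precisely the infinite ones, condition~(iii) matches the hypothesis of Theorem~\ref{thm_PW0.3Graph}, which delivers $G$ \qi\ to~$H$.

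For the ``only if'' direction, assume $G$ is \qi\ to~$H$. Condition~(i) is the classical \qiy\ invariance of the number of ends, and condition~(ii) the \qiy\ invariance of accessibility for locally finite \qt\ graphs. The substantive content is~(iii), which I would derive from the assertion that the set of \qiy\ types of one-ended factors in any terminal factorisation of a connected accessible locally finite \qt\ graph is a \qiy\ invariant, independent of the chosen factorisation. Fix a terminal factorisation of~$G$ and analyse the resulting tree of graphs: the finiteness of all adhesion sets lets one partition the ends of~$G$ into thin ends that follow the underlying tree and thick ends that stay asymptotically inside a single one-ended vertex factor, whose \qiy\ type should be recoverable from any sufficiently coarse neighbourhood of that thick end in~$G$. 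Under a quasi-isometry $f\colon G\to H$, thick ends correspond to thick ends and coarse neighbourhoods to coarse neighbourhoods up to bounded error, so the set of \qiy\ types produced is the same for~$G$ and~$H$; taking $H=G$ with a different terminal factorisation then also yields the intrinsic character of the set.

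The main obstacle is showing rigorously that the \qiy\ type of the one-ended factor containing a given thick end is determined by the coarse geometry of~$G$ alone, with no interference from neighbouring factors glued along finite adhesion. This is the graph-theoretic analogue of the Papasoglu--Whyte argument for multi-ended groups, where the group action supplied the required rigidity; in the present setting I would exploit the finite adhesion and finite identification length hypotheses together with the iterative structure of a terminal factorisation provided by Theorem~\ref{thm_HLMR_Splitting} to argue that, asymptotically, a thick end sees only its own one-ended piece, from which the desired \qiy\ invariance of the one-ended-factor set follows.
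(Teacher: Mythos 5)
Your ``if'' direction is fine and essentially matches the paper: infinitely many ends reduces to Theorem~\ref{thm_PW0.3Graph} (noting that in a terminal factorisation the infinite factors are exactly the one-ended ones), the two-ended case collapses to the double ray, and the at-most-one-end case is handled by comparing the (essentially trivial) terminal factorisations. The only point you gloss over there is that a one-ended graph may still admit non-trivial terminal factorisations (a \ta\ with finite factors over a star), which the paper disposes of via Remark~\ref{rem_FinExten}; since (iii) is assumed for \emph{any} pair of terminal factorisations, your choice of the trivial ones still works.

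The genuine gap is in the ``only if'' direction, precisely at condition (iii), which is the substantive content of the theorem. You propose to attach to each thick end of~$G$ the \qiy\ type of a ``sufficiently coarse neighbourhood'' of that end and to transport this data along the \qiy\ $G\to H$. But you then explicitly flag, without resolving, the key step: that this coarse germ at a thick end recovers the \qiy\ type of the one-ended factor containing it, undisturbed by the infinitely many other factors glued along finite adhesion sets arbitrarily close to that end. That step is not a technicality to be exploited ``from finite adhesion and finite identification''; it is the whole difficulty, and as written your argument restates the problem rather than solving it. (There are further unaddressed issues on this route: whether every one-ended factor of a \qt\ graph carries a thick end, and whether thinness/thickness of ends is itself preserved by \qis\ in the form you need.) The paper closes this gap with a concrete mechanism you do not have: Lemma~\ref{lem_qiEmbedsIntoFactor}, which shows that a one-ended \qt\ graph \qi ally embedded in a \ta\ must land, up to bounded error, inside a single factor --- proved by orienting each edge of the amalgamation tree according to which side receives the unique infinite component of the image and exhibiting a unique sink. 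Applying this recursively to a factor $H'$ of a terminal factorisation of~$H$ gives a \qi\ embedding $\varphi\colon H'\to G_i$; running the same argument backwards gives $\psi\colon G_i\to F$ with $\psi\circ\varphi=\mathrm{id}$, forcing $F=H'$ and hence $H'$ \qi\ $G_i$. Some such embedding rigidity statement, actually proved, is what your proposal is missing.
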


Papasoglu and Whyte~\cite{PW-QuasiIsometries} proved group theoretic versions of Theorems~\ref{thm_PW0.2Graph}, \ref{thm_PW0.3Graph} and~\ref{thm_PW0.4Graph}.
We carry over their proof ideas from free products with amalgamations and HNN-extensions of groups to the setting of \ta s of graphs.
We prove Theorems~\ref{thm_PW0.2Graph} and~\ref{thm_PW0.3Graph} in Section~\ref{sec_proof1} and Theorem~\ref{thm_PW0.4Graph} in Section~\ref{sec_proof2}.

In Section~\ref{sec_WoessQ}, we turn our attention towards a question of Woess that seems to be a bit unrelated at a first glance.
Woess~\cite[Problem 1]{Woess-Topo} posed the problem whether there are locally finite transitive graphs that are not \qi\ to some locally finite Cayley graph.
His problem was settled in the negative by Eskin et al.~\cite{EFW-DLnotTransitive} who proved that the Diestel-Leader graphs are counterexamples.
As a corollary of Theorem~\ref{thm_PW0.4Graph} we shall see that in order to find further counterexamples, it suffices to look at either one-ended graphs or inaccessible ones.

\section{Preliminaries}\label{sec_Prelim}

In this section, we state our major definitions an cite several results that we are going to use in the proofs of our main theorems.

Let $G$ and~$H$ be graphs.
A map $\varphi\colon V(G)\to V(H)$ is a \emph{\qiy} if there are constants $\gamma\geq 1$, $c\geq 0$ such that 
\[
\gamma\inv d_G(u,v)-c\leq d_H(\varphi(u),\varphi(v))\leq\gamma d_G(u,v)+c
\]
for all $u,v\in V(G)$ and such that $\sup\{d_H(v,\varphi(V(G)))\mid v\in V(H)\}\leq c$.
We then say that $G$ is \emph{quasi-isometric} to~$H$.
We call $G$ and~$H$ \emph{\ble} if they are \qi\ with $c=0$.

A tree is \emph{semiregular} or \emph{$(p_1,p_2)$-semiregular}, for $p_1,p_2\in\nat\cup\{\infty\}$, if for the canonical bipartition $V_1,V_2$ of its vertex set all vertices in~$V_1$ have the same degree~$p_1$ and all vertices in~$V_2$ have the same degree~$p_2$.

Let $G_1$ and $G_2$ be two graphs and let $T$ be a $(p_1,p_2)$-semiregular tree with canonical bipartition $V_1,V_2$ of its vertex set.
Let
\[c\colon E(T)\to\{(k,\ell)\mid 1\leq k\leq p_1,1\leq\ell\leq p_2 \}\]
such that for all $v\in V_i$ the $i$-th coordinates of the elements of $\{c(e)\mid v\in e\}$ are exhaust the set $\{k\mid 1\leq k\leq p_1\}$.
Note that in particular the $i$-th coordinates of the elements of $\{c(e)\mid v\in e\}$ are all distinct.
Let $\{S^i_k\mid 1\leq k\leq p_1\}$ be a set of subsets of~$V(G_i)$ such that all $S^i_k$ have the same cardinality.
Let $\varphi_{k,\ell}\colon S_k\to T_\ell$ be a bijection.

For each $v\in V_i$ with $i=1,2$, let $G_i^v$ be a copy of~$G_i$.
We denote by $S_k^v$ the copy of $S^i_k$ in $G_i^v$.
Let $H:=G_1+G_2$ be the graph obtained from the disjoint union of all $G_i^v$ by adding an edge between $x\in S_k^v$ and $\varphi_{k,\ell}(x)\in S_\ell^u$ for every edge $vu\in E(T)$ with $c(vu)=(k,\ell)$ and $v\in V_1$.
Let $G$ be the graph obtained from $H$ by contracting all new edges, i.\,e.\ all edges outside of the $G_i^v$.
We call $G$ the \emph{\ta} of $G_1$ and $G_2$ over~$T$ (with respect to the sets $S_k$ and the maps $\varphi_{k,\ell}$), denoted by $G_1\ast_T G_2$.
If $T$ is clear from the context, we cimply write $G_1\ast G_2$..
The sets $S_k$ and their copies in~$G$ are the \emph{adhesion sets} of the tree amalgamation. 
If the adhesion sets of a tree amalgamation are finite, then this tree amalgamation has \emph{finite adhesion}.
Let $\psi\colon V(H)\to V(G)$ be the canonical map that maps every $x\in V(H)$ to the vertex of~$G$ it ends up after all contractions.
A \ta\ $G_1\ast_T G_2$ is \emph{trivial} if there is some $G_i^v$ such that the restriction of~$\psi$ to~$G^i_v$ is bijective.
Note that a \ta\ of finite adhesion is trivial if $V(G_i)$ is the only adhesion set of~$G_i$ and $p_i=1$ for some $i\in\{1,2\}$.

The \emph{identification size}\footnote{Equivalently, as in~\cite{M-TreeAmalgamation}, you can define $G$ via identifications of vertices in the disjoint union of the $G_i^v$ instead of contraction of the newly added edges. From this point of view, we get the justification of the term `identification size'.} of a vertex $x\in V(G)$ is the smallest size of subtrees of~$T$ for which the contractions to obtain~$x$ take place, i.\,e.\ the size $n$ of the smallest subtree $T'$ of~$T$ such that $x$ is obtained by contracting only edges between vertices in $\bigcup_{u\in V(T')}V(G_j^u)$.
The \ta\ has \emph{finite identification} if every vertex has finite identification size.
The \emph{identification length} mentioned in Theorem~\ref{thm_HLMR_Splitting} is defined analogously using the diameter instead of the size of the subtrees over which the contractions take place.

\begin{rem}\label{rem_PsiQI}
For a \ta\ $G_1\ast G_2$ of finite identification, the canonical map $\psi\colon V(G_1+G_2)\to V(G_1\ast G_2)$ is a quasi-isometry whose constants depend only on the identification sizes of the vertices.
Furthermore, if there is in addition an upper bound on the diameters of adhesion sets, then $G_1$ embeds \qi ally into~$G_1+G_2$.
\end{rem}

If $G_1$ is finite we define the \emph{finite extension of~$G_2$ by~$G_1$ (with respect to $G_1\ast G_2$)} to be isomorphic to a subgraph of $G_1\ast G_2$ that is induced by a copy $G_2^v$ of~$G_2$ and all copies $G_1^u$ of~$G_1$ with $uv\in E(T)$.
It is straight forward to see that the finite extension of~$G_2$ is \qi\ to~$G_2$.

\begin{rem}\label{rem_FinExten}
Let $T$ be a $(p_1,p_2)$-semiregular tree and let $G_1\ast_T G_2$ be a \ta, where $G_1$ is finite.
Let $G_2'$ be the finite extension of~$G_2$ by~$G_1$ with respect to $G_1\ast_T G_2$.
We will define a \ta\ $G_2'\ast_{T'}G_2$ with $G_2'\ast_{T'}G_2=G_1\ast_T G_2$, where $T'$ will be a $(p_2(p_1-1),p_2)$-semiregular tree.
For that choose $u\in V_2$ and let $U_1$ be the set of vertices $v\in V_2$ with distance $0\textrm {\ mod\ } 4$ to~$u$ and $U_2:=V_2\sm U_1$.
To obtain $T'$ we start with $T$ and contract all edges of~$T$ that are incident with a vertex of~$U_1$.
The resulting graph is a $(p_2(p_1-1),p_2)$-semiregular tree.
We may assume that the vertex set of~$T'$ is $U_1\cup U_2$.
For each vertex in~$U_1$ we take a copy of~$G_2'$ and for each vertex in~$U_2$ we take a copy of~$G_2$.
There is a canonical way of assigning the labels to the edges of~$T'$ so that the \ta\ $G_2'\ast_{T'}G_2$ is the same as the \ta\ $G_1\ast_T G_2$.
\end{rem}

A \emph{factorisation} of a connected locally finite \qt\ graph $G$ is a tuple $(G_1,\ldots,G_n)$ of connected locally finite \qt\ graphs such that $G$ is obtained from the elements of the tuple by iterated \ta s of finite adhesion and finite identification such that for each step some group of automorphisms of each factors acts \qt ly on this factor and the set of its adhesion sets.
A factorisation is \emph{terminal} if every element of the tuple has at most one end.\footnote{We note that the definitions for (terminal) factorisations are weaker than in~\cite{HLMR} as we do not care about the specific group actions for our results here.}

We will use Theorem~\ref{thm_HLMR_Splitting} in a slightly different form to avoid the definition of a \ta\ with respect to the group actions.
We note that the statement about the finite identification varies a bit from~\cite[Theorem 5.3]{HLMR}, but the version we use here is mentioned in its proof in~\cite{HLMR}.

\begin{thm}\label{thm_HLMR_Splitting_v2}\cite[Theorem 5.3]{HLMR}
Every connected \qt\ locally finite\linebreak graph with more than one end is a non-trivial tree amalgamation $G_1\ast G_2$ of finite adhesion and finite identification of two connected \qt\ locally finite graphs such that the adhesion sets in each factor have at most two orbits under some group acting \qt ly on that factor.\qed
\end{thm}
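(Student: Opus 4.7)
The plan is to derive Theorem~\ref{thm_HLMR_Splitting_v2} essentially for free from Theorem~\ref{thm_HLMR_Splitting}, the only work being to unpack the construction witnessing it in~\cite{HLMR}. First I would apply Theorem~\ref{thm_HLMR_Splitting} to~$G$ to obtain a non-trivial \ta\ $G_1\ast_T G_2$ of finite adhesion and finite identification length with respect to group actions. It then remains to verify two refinements of the conclusion: the strengthening from finite identification length to finite identification, and the claim that the adhesion sets in each factor form at most two orbits under a \qt\ automorphism group.

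For the first refinement, the key observation is that in the construction of~\cite{HLMR} the factors $G_1$ and $G_2$ are built from local pieces of the locally finite graph~$G$, so each vertex of a factor lies in only finitely many adhesion sets, with a uniform bound coming from the \qt\ action. Consequently the subtree $T_x\sub T$ realising the identifications that produce a given vertex $x\in V(G)$ has locally bounded branching; combining this with the bounded diameter provided by finite identification length forces $T_x$ to be finite, which is precisely finite identification. (Without such a local branching bound, finite identification length alone would not suffice when $T$ has vertices of infinite degree.)

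For the second refinement, I would read off the construction in the proof of~\cite[Theorem~5.3]{HLMR}. The \ta\ there is built from a Stallings-type splitting of~$G$ over a finite cut, modelled on a Bass--Serre-style edge of groups of one of two shapes: an amalgamated-product shape, with a single edge-orbit between two distinct vertex-orbits of the ambient quasi-transitive group, or an HNN shape, with a single edge-orbit between two vertices in the same vertex-orbit. In either case, the \qt\ action of the corresponding vertex-group on each factor $G_i$ permutes its adhesion sets with a single orbit in the amalgamated case and with at most two orbits in the HNN case, the two orbits corresponding to the two sides of the stable letter. This is exactly the orbit condition claimed.

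The principal obstacle, such as it is, is the bookkeeping needed to match up the abstract Bass--Serre data in~\cite{HLMR} with the explicit adhesion-set data of the \ta\ as defined in Section~\ref{sec_Prelim}; no new graph-theoretic idea is required, which is why the authors note that this refined version is already implicit in the proof of~\cite[Theorem~5.3]{HLMR}.
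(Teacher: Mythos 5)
The paper offers no proof of this statement: it is imported directly from \cite[Theorem 5.3]{HLMR}, with the explicit remark that the refinements you discuss (finite identification in place of finite identification length, and the two-orbit condition on the adhesion sets) are already mentioned in the proof given there. Your proposal takes essentially the same route---deferring to the construction in \cite{HLMR}---and your reconstruction of why the refinements hold is plausible; the only point to flag is that your upgrade from finite identification length to finite identification genuinely requires the (here unverifiable) fact that each vertex of a factor lies in boundedly many adhesion sets, since the amalgamation trees arising in this setting have vertices of infinite degree, so finite diameter of the identifying subtree alone would not suffice.
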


We note that in a \ta\ $G_1\ast G_2$ such that all adhesion sets have bounded diameter in each factor, $G_i$ embeds \qi ally into $G_1\ast G_2$.

A stronger version of Theorem~\ref{thm_HLMR_Splitting_v2} was obtained in~\cite{HLMR} for accessible connected \qt\ locally finite graphs.
In order to define accessibility, we have to define ends of graphs first.
\emph{Rays} are one-way infinite paths and two rays in a graph $G$ are equivalent if they lie eventually in the same component of $G-S$ for every finite set~$S$.
The equivalence classes of rays are the \emph{ends} of~$G$.
A \qt\ graph is \emph{accessible} if there is some $n\in\nat$ such that every two ends can be separated by at most $n$ vertices.

\begin{thm}\label{thm_HLMR_Access}\cite[Theorem 6.2]{HLMR}
Let $G$ be an accessible connected \qt\ locally finite graph.
Then there are connected \qt\ locally finite graphs $G_1,\ldots, G_n$, $H_1,\ldots,H_{n-1}$ with $G=H_{n-1}$ and trees $T_1,\ldots, T_{n-1}$ such that the following hold:
\begin{enumerate}[\rm (1)]
\item every $G_i$ has at most one end;
\item for every $i\leq n-1$, the graph $H_i$ is a \ta\ $H\ast_{T_i}H'$ of finite adhesion, where
\[
H,H'\in\{G_j\mid 1\leq j\leq n\}\cup\{H_j\mid 1\leq j<i\}.
\]
\qed
\end{enumerate}
\end{thm}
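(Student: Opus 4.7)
My plan is to prove Theorem~\ref{thm_HLMR_Access} by induction on a complexity invariant, with the single splitting step supplied by Theorem~\ref{thm_HLMR_Splitting_v2} and accessibility furnishing the termination measure. The base case is immediate: if $G$ has at most one end, set $n=1$, $G_1 := G$, and leave the list of intermediate graphs $H_i$ empty. Otherwise, Theorem~\ref{thm_HLMR_Splitting_v2} produces a non-trivial \ta\ $G = A \ast_T B$ of finite adhesion and finite identification with $A$ and $B$ connected \qt\ locally finite graphs whose adhesion sets form at most two orbits under suitable group actions. The induction hypothesis will then be applied to $A$ and to $B$ separately.

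The core of the argument is to exhibit an invariant of accessible \qt\ locally finite graphs that strictly decreases under such a splitting. Following the Dunwoody scheme, fix the accessibility constant $k$ for $G$ and, by a canonical nestification of finite-order cuts, produce an $\Aut(G)$-invariant nested family $\mathcal{F}$ of cuts of order at most $k$ that realises every end-splitting of $G$. By quasi-transitivity, $\mathcal{F}$ has only finitely many orbits, and this orbit count will be the complexity. The splitting $G = A \ast_T B$ is along one orbit of $\mathcal{F}$ (realised by the adhesion sets), so restricting $\mathcal{F}$ to the \qi\ copies of $A$ and $B$ inside $G$ (which embed by Remark~\ref{rem_PsiQI} and the remark following Theorem~\ref{thm_HLMR_Splitting_v2}) yields nested invariant families on $A$ and $B$ of cuts of order at most $k$, each missing the orbit used for the splitting. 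Hence $A$ and $B$ are again accessible and strictly simpler, enabling induction.

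The main obstacle is exactly this descent step: one must verify that the adhesion sets produced by Theorem~\ref{thm_HLMR_Splitting_v2} genuinely come from cuts in $\mathcal{F}$, and that every cut of $A$ (respectively of $B$) separating two ends of $A$ lifts to a cut of $G$ separating corresponding ends, without exceeding the order bound $k$. The finite-adhesion and finite-identification hypotheses are crucial here, as they give the \qi\ embedding of factors into $G$ and control cut-size inflation under contraction. One also needs the \qt\ group actions to transfer to the factors; the relevant group acting on $A$ and on its adhesion sets is obtained from the stabiliser in $\Aut(G)$ of the chosen orbit of cuts, acting on a single representative copy of $A$ in $A \ast_T B$, and the two-orbit clause in Theorem~\ref{thm_HLMR_Splitting_v2} is what keeps this bookkeeping finite.

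Once the induction has produced sequences $(G^A_1,\ldots,G^A_a)$ with intermediate amalgamations $(H^A_1,\ldots,H^A_{a-1})$ for $A$ and analogous data for $B$, the assembly is purely combinatorial. Concatenate the factor sequences into a single tuple $(G_1,\ldots,G_n)$ of graphs with at most one end each, list the intermediate amalgamations in order (first those building $A$, then those building $B$), and finally set $H_{n-1} := A \ast_T B = G$. Each $H_i$ is then a \ta\ of finite adhesion of two graphs drawn from $\{G_j : 1 \leq j \leq n\} \cup \{H_j : j < i\}$, verifying clauses (1) and (2).
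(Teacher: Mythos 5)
First, note that the paper does not prove Theorem~\ref{thm_HLMR_Access} at all: it is quoted verbatim from \cite[Theorem 6.2]{HLMR} (hence the \qed{} in the statement), so there is no in-paper proof to compare against. Judged on its own terms, your proposal has the right architecture --- induct, split via Theorem~\ref{thm_HLMR_Splitting_v2}, and use accessibility to make a complexity drop --- but it leaves the one genuinely hard step unproved, and the specific mechanism you propose for it would not work as stated.

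The gap is the descent step, which you yourself flag as ``the main obstacle'' and then do not close. Theorem~\ref{thm_HLMR_Splitting_v2} is a black box: it produces \emph{some} non-trivial splitting $G=A\ast_T B$, with no control over which finite separators its adhesion sets realise. There is therefore no reason the adhesion sets should belong to your pre-chosen canonical nested family $\mathcal{F}$, and without that compatibility the orbit count of $\mathcal{F}$ gives you nothing: the induction need not terminate. This is not a technicality --- Dunwoody's inaccessible transitive graph (cited in Section~\ref{sec_WoessQ}) shows that blind iteration of the splitting theorem can fail to terminate, so the entire content of the theorem lives in exactly the step you defer. Two further points in your sketch are also unjustified and genuinely delicate: (a) an $\Aut(G)$-orbit of cuts, restricted to a part and acted on only by the (setwise) stabiliser of that part, can break into several orbits, so ``one fewer orbit'' does not follow from ``we used up one orbit''; and (b) the factors $A,B$ do not literally sit inside $G$ (the amalgamation contracts edges), so ``restricting $\mathcal{F}$'' to them, and bounding the orders of the resulting cuts, needs an actual construction. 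The standard remedy --- and, as far as I can tell, what \cite{HLMR} does --- is to reverse the logic: fix once and for all a tree-decomposition of bounded adhesion that efficiently distinguishes all ends and has finitely many orbits of separators (this is where accessibility enters), and then \emph{construct} each tree amalgamation directly from one orbit of edges of that decomposition, rather than invoking the splitting theorem and reconciling its output afterwards. Your assembly step at the end is fine (the count $n-1=(a-1)+(b-1)+1$ checks out), but as written the proof is incomplete at its core.
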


Papasoglu and Whyte used a construction in~\cite{PW-QuasiIsometries} that we will use for our proofs, too.
However, we can express it in terms of \ta s and thus stick to our notations.
Let $G_1$ and~$G_2$ be graphs and let $v_i\in V(G_i)$ for $i=1,2$ be their \emph{base vertices}.
Let $(S^i_k)_{k\leq |G_i|}$ be the adhesion sets such that $(S^i_k)_{k\leq |G_i|}$ forms a partition of $V(G_i)$ into sets of size~$1$.
Assume that $S^1_1=\{v_1\}$ and $S^2_1=\{v_2\}$.
Let $T$ be a $(|G_1|,|G_2|)$-semiregular tree with canonical vertex partition $\{V_1,V_2\}$, and let $u\in V_1$.
Let
\[
c\colon E(T)\to\{(k,\ell)\mid 1\leq k\leq |G_1|,1\leq\ell\leq|G_2|\}
\]
be as required for a \ta\ with the following additional property for all edges $xy\in E(T)$, where $x$ is closer to~$u$ than~$y$:
\begin{enumerate}[(1)]
\item[$\bullet$] if $x\in V_1$, let the second coordinate of $c(xy)$ be~$1$;
\item[$\bullet$] if $x\in V_2$, let the first coordinate of $c(xy)$ be~$1$.
\end{enumerate}
We denote the graph $G_1+G_2$ by $G_1+_{v_1,v_2}G_2$.
Note that there is a unique edge in~$T$ with label $(1,1)$.
We call the corresponding edge in $G_1+_{v_1,v_2}G_2$ the \emph{base edge} of $G_1+_{v_1,v_2}G_2$.
It is the unique edge in $G_1+_{v_1,v_2}G_2$ that connects two base vertices.

Papasoglu and Whyte proved several lemmas about this special kind of \ta s.
Before we state them, need a further definition.
We call a graph $G$ \emph{\PWhom} if there is some $\gamma$ such that for every $u,v\in V(G)$ there is a $\gamma$-bilipschitz map $G\to G$ that maps $u$ to~$v$.
Note that every \qt\ graph is \PWhom.

\begin{lem}\label{lem_PW_1.1}\cite[Lemma 1.1]{PW-QuasiIsometries}
Let $G$ and~$H$ be \PWhom\ graphs.
Let $F$ be a graph such that there is some $\gamma>0$ such that the following hold.
\begin{enumerate}[{\rm (1)}]
\item $F$ contains families $(G_i)_{i\in I}$ and $(H_j)_{j\in J}$ of disjoint subgraphs such $V(F)$ is covered by them.
\item Every vertex of~$F$ is incident with a unique edge that lies outside all $G_i, H_j$.
\item Every edge of~$F$ outside of all $G_i,H_j$ is incident with a vertex in some $G_i$ and with a vertex in some $H_j$.
\item For every $i\in I$, $j\in J$ there is a $\gamma$-\blec\ $G_i\to G$, $H_j\to H$, respectively.
\item Contracting all edges inside the graphs $G_i$ and $H_j$ results in a tree.
\end{enumerate}
Then there is a constant $\delta$ (depending only on~$\gamma$ and the \PWhom ness-constants of~$G$ and~$H$) such that for any edge $e\in E(F)$ connecting some $G_i$ to some $H_j$ and any base points $u$ in~$G$ and $v$ in~$H$ there is a $\delta$-\blec\ $F\to G+_{u,v}H$ that maps $e$ to the base edge of $G+_{u,v}H$.\qed
\end{lem}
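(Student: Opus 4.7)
The plan is to construct a $\delta$-bilipschitz bijection $\phi\colon V(F)\to V(G+_{u,v}H)$ by transporting the tree structure of $F$ to the one underlying $G+_{u,v}H$ and gluing together local bilipschitz bijections $G_i\to G$ and $H_j\to H$ piece by piece.

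First I set up the tree skeleton. By (5), contracting every inside edge of every $G_i$ and $H_j$ in $F$ yields a tree $T_F$, which by (3) is bipartite with parts $\{G_i\}_{i\in I}$ and $\{H_j\}_{j\in J}$. A $\gamma$-\blec\ has $c=0$, so it is a bijection of vertex sets; hence $|G_i|=|G|$ and $|H_j|=|H|$, and by (2) the degree in $T_F$ of a piece $P$ equals $|P|$. Thus $T_F$ is a $(|G|,|H|)$-semiregular bipartite tree of exactly the same type as the tree $T$ underlying $G+_{u,v}H$. Let $G_0,H_0$ be the pieces containing the endpoints of the distinguished edge $e$. By semiregularity I can choose a rooted tree isomorphism $\Phi\colon T_F\to T$ that sends the edge $G_0H_0$ to the base edge of $T$; write $\tilde P:=\Phi(P)$ for each piece $P$.

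Next I build the local bijections piece by piece in BFS order from $G_0,H_0$. For each piece $P\neq G_0,H_0$ let $\mathrm{par}(P)$ be the unique vertex of $P$ incident with the outside edge to its parent piece in $T_F$; by the convention on the coloring $c$ in the definition of $G+_{u,v}H$, the corresponding parent edge at $\tilde P$ lands on the base vertex of $\tilde P$ (namely $u$ if $\tilde P$ is a copy of $G$, $v$ if a copy of $H$). Use (4) to fix a $\gamma$-bilipschitz bijection $\psi_P\colon P\to G$ or $P\to H$, and use the \PWhom\ property to postcompose with a $\gamma$-bilipschitz self-bijection of $G$ or $H$ moving $\psi_P(\mathrm{par}(P))$ to the base vertex. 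This yields a $\gamma^2$-bilipschitz bijection $\phi_P\colon P\to\tilde P$ with $\phi_P(\mathrm{par}(P))$ equal to the base vertex of $\tilde P$. For $P=G_0$ and $P=H_0$ I fix $\phi_{G_0},\phi_{H_0}$ so that the two endpoints of $e$ are sent to $u$ and $v$ respectively. The choice is consistent with $\Phi$ because at a processed piece $P$, each non-parent vertex $w\in P$ determines a child piece $P'$ in $T_F$ via its outside edge, and $\phi_P(w)$ determines by the $G+_{u,v}H$ conventions a child copy in $T$ whose base vertex is already the far endpoint; this forces precisely $\Phi(P')$. Setting $\phi:=\bigcup_P\phi_P$ thus gives a bijection $V(F)\to V(G+_{u,v}H)$ mapping $e$ to the base edge.

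To verify bilipschitz control, note that by (2) any geodesic in $F$ between $x\in P_1$ and $y\in P_k$ must follow the unique tree path $P_1,\dots,P_k$ in $T_F$ and use the unique outside edge between consecutive pieces, since a detour through any other piece costs two additional outside edges that cannot be recovered in a tree. Hence
\[
d_F(x,y)=\sum_{i=1}^k d_{P_i}(\alpha_i,\beta_i)+(k-1),
\]
where $\alpha_i,\beta_i\in P_i$ are the entry and exit vertices of the geodesic in $P_i$; the analogous decomposition holds in $G+_{u,v}H$ along the image pieces $\tilde P_1,\dots,\tilde P_k$. Since each $\phi_{P_i}$ is $\gamma^2$-bilipschitz and no additive error accumulates, the overall constant $\delta$ can be taken to be $\gamma^2$ (up to a bounded factor depending only on the \PWhom ness constants of $G,H$). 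The main obstacle is exactly the gluing step: independently chosen bilipschitz bijections on the pieces have no reason to respect the outside-edge pattern dictated by $\Phi$, and this is where the \PWhom\ hypothesis is indispensable, since it permits us to relocate the parent-endpoint image of each piece to the base vertex with bounded distortion, which is precisely where the $G+_{u,v}H$ construction places the matching outside edge.
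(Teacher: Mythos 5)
The paper states this lemma only as a citation of Papasoglu--Whyte and supplies no proof of its own, so the comparison is with the original source: your argument is correct and is essentially that proof --- identify the contracted tree as a $(|G|,|H|)$-semiregular tree matching the one underlying $G+_{u,v}H$, build the bijection piece by piece using the unvarying property to relocate each piece's attaching vertex to the base vertex, and control distances via the exact geodesic decomposition $d_F(x,y)=\sum_i d_{P_i}(\alpha_i,\beta_i)+(k-1)$ along the tree path, which works because no additive error occurs. The only cosmetic point is that the tree isomorphism $\Phi$ should be presented as defined recursively alongside the maps $\phi_P$ rather than fixed in advance, which is what your phrase ``this forces precisely $\Phi(P')$'' already amounts to.
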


Lemma~\ref{lem_PW_1.1} is our main tool to switch from the \ta s we are stating with to \ta s that are defined using base vertices, in order to apply the two following lemmas.

Let $G_1$ and~$G_2$ be graphs with base vertices.
The \emph{wedge} of~$G_1$ and~$G_2$ is their disjoint union with an edge joining their base vertices.

\begin{lem}\label{lem_PW_1.3}\cite[Lemma 1.3]{PW-QuasiIsometries}
Let $G_1$ and~$G_2$ be infinite \PWhom\ graphs with base vertices $u,v$, respectively.
Then $G:=G_1+_{u,v} G_2$ is \ble\ to the wedge of any finite number of copies of~$G$.\qed
\end{lem}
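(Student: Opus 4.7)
The plan is to deduce Lemma~\ref{lem_PW_1.3} from Lemma~\ref{lem_PW_1.1}. The case $k=2$ is the main step; the general case will follow by induction. Write $W$ for the wedge of two copies of $G$ with wedge edge $e_w$ joining the two chosen base vertices. Each copy of $G = G_1 +_{u,v} G_2$ sitting inside $W$ comes with its canonical decomposition into $G_1$-factors and $G_2$-factors joined along the semiregular tree $T$, and inside each copy every vertex is incident with a unique external edge. Taking the union of these two decompositions gives a decomposition of $W$ into subgraphs uniformly bilipschitz to $G_1$ and $G_2$ that are disjoint, cover $V(W)$, and whose contraction produces two copies of $T$ joined by $e_w$, which is still a tree. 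Hypotheses (1), (4), and (5) of Lemma~\ref{lem_PW_1.1} are thus immediate.

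The subtle points are hypotheses (2) and (3). For (3) one needs $e_w$ to bridge a $G_1$-factor to a $G_2$-factor; this is arranged by choosing the base vertices of the two copies of $G$ to lie in factors of opposite types, which is possible because $G$, as a tree amalgamation of quasi-transitive graphs, is itself quasi-transitive (and hence \PWhom), so the bilipschitz class of the wedge does not depend on the precise base vertex of $G$. For (2) one needs each vertex to be incident with a unique external edge, which fails at the two wedge-base vertices, where $e_w$ adds a second external edge. I would repair this by a local rearrangement of the decomposition near the wedge — absorbing $e_w$ and an adjacent factor into a modified block — and use \PWhom-ness of $G_1$ and $G_2$ to verify that the modified blocks remain uniformly bilipschitz to $G_1$ or $G_2$.

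Once all five hypotheses are verified, Lemma~\ref{lem_PW_1.1} yields a bilipschitz equivalence $W \to G_1 +_{u,v} G_2 = G$. The general case $k \geq 3$ follows by induction: the wedge of $k$ copies of $G$ is the wedge of the $(k{-}1)$-fold wedge with one more copy of $G$, and combining the inductive hypothesis with the $k=2$ case gives that the $k$-fold wedge is bilipschitz to $G \vee G$, hence to $G$.

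The main obstacle is the repair of hypothesis~(2) of Lemma~\ref{lem_PW_1.1} in the presence of the wedge edge: the decomposition must be modified locally so that every vertex again carries a unique external edge, and this rearrangement has to be controlled in bilipschitz constants. The \PWhom\ hypothesis on $G_1, G_2$ (and the resulting \PWhom-ness of $G$) is precisely what provides the flexibility to perform this adjustment; apart from this, the proof is a direct verification of the hypotheses of Lemma~\ref{lem_PW_1.1}.
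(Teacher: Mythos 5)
First, note that the paper gives no proof of Lemma~\ref{lem_PW_1.3}: it is imported verbatim from Papasoglu and Whyte and stated as a quoted result, so there is no in-paper argument to compare yours against. Your strategy --- decompose the wedge of two copies of $G$ into its constituent $G_1$- and $G_2$-blocks and feed this to Lemma~\ref{lem_PW_1.1} --- is the natural one and is in the spirit of the original source. But the two points you yourself flag as ``subtle'', namely hypotheses (2) and (3) at the wedge vertices, are the entire content of the lemma, and your proposed repairs do not go through as described. For (3), your justification that the base vertex of $G$ may be moved into a block of the other type rests on the claim that $G$ is \qt\ ``as a tree amalgamation of quasi-transitive graphs''; but $G_1$ and $G_2$ are only assumed \PWhom, not \qt, and even for \qt\ factors the construction $G_1+_{u,v}G_2$ (which involves arbitrary choices of the labelling $c$ and the bijections $\varphi_{k,\ell}$) need not be \qt. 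What you actually need --- that the \blec\ class of the wedge does not depend on which vertex of $G$ is declared its base vertex --- is itself a nontrivial statement; it can plausibly be extracted from Lemma~\ref{lem_PW_1.1} applied to $G$ with its own block decomposition (whose conclusion lets you move the base edge), but this has to be argued, not asserted.

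The more serious gap is hypothesis (2). In $G=G_1+_{u,v}G_2$ every vertex carries exactly one external edge, so each wedge vertex of $W$ carries two (its own base edge and the wedge edge $e_w$), and the ``local rearrangement'' you invoke is left unspecified while the obvious candidates fail. If you absorb $e_w$ together with the block on its far side into one enlarged block, that block becomes a wedge of two copies of $G_1$ (or of $G_1$ and $G_2$), which is in general not \ble\ to $G_1$ or to $G_2$ --- it can have a different number of ends --- so hypothesis (4) breaks. If instead you move only the far wedge vertex $b^{(2)}$ across $e_w$ into the block containing $b^{(1)}$, then every edge from $b^{(2)}$ to its former block becomes an external edge joining two blocks of the same type and gives each neighbour of $b^{(2)}$ a second external edge, so (2) and (3) now fail at all of those neighbours and the defect propagates. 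A correct proof has to confront this head-on, for instance by establishing Lemma~\ref{lem_PW_1.1} under the weaker hypothesis that every vertex is within bounded distance of an external edge and is incident with boundedly many of them (essentially the form in which Papasoglu and Whyte state it for metric spaces), or by exhibiting an explicit bilipschitz bijection that reroutes one of the two external edges at each wedge vertex along a ray inside the ambient infinite block. As written, the argument assumes away exactly the difficulty the lemma is about; the inductive step for $k\geq 3$ is fine modulo the same base-point bookkeeping.
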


\begin{lem}\label{lem_PW_1.4}\cite[Lemma 1.4]{PW-QuasiIsometries}
Let $G_1$ and~$G_2$ be infinite \PWhom\ graphs and $u,v$ their base vertices, respectively.
Then $G_1+_{u,v} G_2$ and $G_1+_{u,v}(G_2+_{v,v}G_2)$ are \ble.\qed
\end{lem}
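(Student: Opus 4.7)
Set $A := G_1+_{u,v}G_2$ and $B := G_1+_{u,v}(G_2+_{v,v}G_2)$. The plan is to apply Lemma~\ref{lem_PW_1.1} in order to exhibit $B$ as bilipschitz equivalent to $G_1+_{u',v'}G_2$ for some base points $u',v'$; since such tree amalgamations with the same factors $G_1$ and $G_2$ differ only in a choice of base points (affecting the result only up to a bounded perturbation near the base edge), this will yield $B\sim A$.

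The natural block decomposition of $B$ comes in two layers: the outer amalgamation gives copies of $G_1$ (one per $V_1$-vertex of the outer semiregular tree) together with copies of $G_2+_{v,v}G_2$ (one per $V_2$-vertex), and each such $G_2+_{v,v}G_2$-block further decomposes via its own inner tree into copies of $G_2$. Taking the $(G_i)_{i\in I}$ in Lemma~\ref{lem_PW_1.1} to be all the $G_1$-copies and the $(H_j)_{j\in J}$ to be all the $G_2$-copies, conditions (1), (4), (5) are immediate: the copies cover $V(B)$, each is isometric to $G_1$ or $G_2$, and contracting every $G_i$ and $H_j$ yields the outer tree with each $V_2$-vertex expanded by the corresponding inner tree, which is still a tree.

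The key obstacle is conditions (2) and (3) of Lemma~\ref{lem_PW_1.1}: each vertex inside a $G_2$-copy has \emph{two} edges outside all $G_i,H_j$, namely an ``inner'' bridge to another $G_2$-copy in the same $G_2+_{v,v}G_2$-block and an ``outer'' bridge to a $G_1$-copy, and the inner bridges join two $H_j$'s rather than a $G_i$ and an $H_j$. To overcome this I would pass to a bilipschitz-equivalent model $F$ of $B$ in which every vertex has a unique bridge and every bridge is of the form $G_i$-to-$H_j$. The construction of $F$ reroutes each inner bridge through an auxiliary $G_1$-shaped detour attached locally, where Lemma~\ref{lem_PW_1.3} is used to identify the relevant wedge-like subgraphs before and after rerouting as bilipschitz equivalent. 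After the rerouting, the block tree becomes a bipartite alternating tree of $G_1$- and $G_2$-copies, so Lemma~\ref{lem_PW_1.1} applies to $F$ and gives $F\sim G_1+_{u',v'}G_2\sim A$.

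The main difficulty will be the rerouting step: one must verify that the local modifications that turn inner $G_2$-$G_2$ bridges into $G_1$-$G_2$ bridges glue together into a globally bilipschitz transformation of $B$, and that the freshly introduced $G_1$-copies carry their bridges in such a way that each of their vertices has exactly one bridge (so that condition (2) of Lemma~\ref{lem_PW_1.1} holds for $F$). The remaining verifications are combinatorial bookkeeping: checking that contracting the blocks in $F$ still yields a tree, and that the bilipschitz constants introduced by Lemma~\ref{lem_PW_1.3} depend only on the \PWhom ness constants of $G_1$ and $G_2$, so that they can be combined into a single global bilipschitz equivalence $B\sim F$.
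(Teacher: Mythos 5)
The paper does not prove this lemma; it is imported verbatim from Papasoglu and Whyte, so your sketch can only be judged on its own terms. Your high-level plan --- massaging $B=G_1+_{u,v}(G_2+_{v,v}G_2)$ into a shape where Lemma~\ref{lem_PW_1.1} applies with blocks bilipschitz to $G_1$ and to $G_2$ --- is the right frame, and you correctly locate the obstruction: in the natural block decomposition every vertex of every $G_2$-copy lies on exactly two bridges (one ``outer'' edge to a $G_1$-copy and one ``inner'' edge to another $G_2$-copy), so conditions (2) and (3) of Lemma~\ref{lem_PW_1.1} fail. The problem is that your proposed repair does not remove this obstruction. If you insert an auxiliary $G_1$-shaped detour on an inner bridge $ab$, then $a$ and $b$ each still lie on two edges outside the blocks (their outer bridge plus the new edge into the detour), and all but finitely many vertices of the auxiliary copy lie on no bridge at all; so condition (2) fails for the new model $F$ exactly as it did for $B$. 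The difficulty is a genuine counting one: every vertex of every $G_1$-copy in $B$ already carries exactly one bridge, so there is no local slack anywhere to receive a rerouted inner bridge, and any bounded-displacement reassignment of bridges has to push the excess out to infinity along the block tree. That cascading reassignment (a Hilbert-hotel-type bijection organised along the tree) is the actual content of the lemma, and it is precisely the step you defer as ``the main difficulty''.

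A second, independent gap: passing from $B$ to $F$ by grafting a fresh infinite copy of $G_1$ onto each of the infinitely many inner bridges is not a harmless normalisation. A \blec\ in the sense of this paper is a bijection on vertex sets with bounded multiplicative distortion, so $B\sim F$ would itself require a bounded-displacement bijection between $V(B)$ and $V(B)$ together with countably many extra copies of $V(G_1)$ --- again the same kind of argument you are trying to avoid. Lemma~\ref{lem_PW_1.3} cannot supply it: that lemma compares a \ta\ of two infinite \PWhom\ graphs with a wedge of finitely many copies of \emph{itself}, and says nothing about a neighbourhood of a bridge before and after inserting a detour. As written, the proposal therefore reduces the lemma to two unproved claims, each essentially as hard as the lemma itself; one needs the explicit rerouting/shifting construction of Papasoglu and Whyte (or a reproof of it) to close the argument.
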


We end this section by proving two \qiy\ results regarding infinitely-ended trees.

\begin{lem}\label{lem_QISemiTrees}
Every two locally finite trees with infinitely many ends and finitely many orbits of vertices are \qi.
\end{lem}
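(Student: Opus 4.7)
The plan is to show that every such tree is \qi\ to the $3$-regular tree $T_3$; the lemma then follows by transitivity of the quasi-isometry relation.

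Given a locally finite \qt\ tree $T$ with infinitely many ends, I would first pass to the subtree $T^\ast\sub T$ spanned by the ends of~$T$ (equivalently, the smallest subtree containing every ray). Since $\Aut(T)$ preserves ends and hence $T^\ast$ setwise, the components of $T\sm T^\ast$ are finite subtrees hanging off single vertices of~$T^\ast$; by quasi-transitivity they have uniformly bounded size, so $T^\ast\hookrightarrow T$ is a \qiy. Restriction of $\Aut(T)$ endows $T^\ast$ with a \qt\ action and $T^\ast$ has no leaves, so every vertex has degree in $\{2,\ldots,d\}$ for some $d$.

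Next, smooth $T^\ast$ by contracting each maximal path of degree-$2$ vertices to a single edge. Such paths have uniformly bounded length: in a long maximal path $v_0 v_1\cdots v_n$ with $v_0,v_n$ of degree $\geq 3$ in $T^\ast$, a pigeonhole argument on the finitely many $\Aut(T)$-orbits of interior (degree-$2$) vertices produces indices $0<i<j<n$ with $v_i,v_j$ in a common orbit. Any $\phi\in\Aut(T)$ sending $v_i\mapsto v_j$ preserves $T^\ast$ (an induced subgraph), hence maps $\{v_{i-1},v_{i+1}\}$ to $\{v_{j-1},v_{j+1}\}$. Tracing edges, $\phi$ acts on the path either as a shift $v_k\mapsto v_{k+(j-i)}$ or as a reflection $v_k\mapsto v_{i+j-k}$, and for $n$ large we may arrange that $\phi(v_0)$ lies in the interior, contradicting that $\phi$ preserves $T^\ast$-degree. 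The smoothed tree $T^{\ast\ast}$ is therefore \qi\ to~$T^\ast$ and has all vertex degrees in $\{3,\ldots,d\}$.

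Finally, any locally finite tree $T^{\ast\ast}$ with degrees in $[3,d]$ is \qi\ to~$T_3$: one constructs an embedding $T_3\hookrightarrow T^{\ast\ast}$ by selecting three of the at least three neighbors at each vertex, and conversely an embedding $T^{\ast\ast}\hookrightarrow T_3$ by replacing each vertex of degree $k$ by a rooted binary subtree of $T_3$ with $k$ leaves. Both maps are bilipschitz with constants depending only on~$d$, and coarse density follows from the degree bound. The main obstacle is the bounded-length claim for degree-$2$ paths in the second step; the first and third steps are routine.
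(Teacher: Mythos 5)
Your overall strategy is the paper's: show that every such tree is quasi-isometric to the $3$-regular tree $T_3$ by replacing vertices with bounded-size subtrees. Your preprocessing (pruning the finite branches to get $T^\ast$, then contracting the bounded maximal degree-$2$ paths) is a genuinely useful addition: the paper's construction assigns to each vertex $v$ a path of length $d(v)-3$ and thus tacitly assumes minimum degree~$3$, so some such reduction is needed. Your step~2 is sound, and the pigeonhole argument you flag as the main obstacle can in fact be replaced by one line: the function ``distance in $T^\ast$ to the nearest vertex of $T^\ast$-degree at least~$3$'' is $\Aut(T)$-invariant and everywhere finite (branch vertices exist since there are infinitely many ends), hence bounded because it takes finitely many values on the finitely many orbits. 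This also excludes infinite maximal degree-$2$ rays, a case your phrasing does not explicitly address.

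The real gaps are in the step you call routine. First, the inclusion $T_3\hookrightarrow T^{\ast\ast}$ obtained by selecting three neighbours at each vertex is in general \emph{not} coarsely dense: already for $T^{\ast\ast}=T_4$, the discarded fourth neighbour of the root carries an entire subtree whose vertices at distance $k$ from the root are at distance $k$ from the image, so this direction must be abandoned (which is harmless, since only one coarsely surjective quasi-isometric embedding is needed). Second, for the map $T^{\ast\ast}\to T_3$ the replacement subtrees must partition $V(T_3)$, and the subtree assigned to a degree-$k$ vertex must have exactly $k$ outgoing $T_3$-edges so that the adjacencies of $T^{\ast\ast}$ can be realised. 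A subtree of $T_3$ on $m$ vertices has $3m-2(m-1)=m+2$ outgoing edges, so one needs subtrees on $k-2$ vertices (the paper uses paths of length $k-3$); a rooted binary subtree with $k$ leaves has roughly $2k$ outgoing edges and cannot tile $T_3$ compatibly. Moreover, ``coarse density follows from the degree bound'' only after one verifies that the subtrees can be chosen recursively so as to exhaust $V(T_3)$ while respecting adjacency --- this is exactly the content of the paper's construction of the paths $P_w$, and it is the part that needs to be written out. With those corrections your argument goes through.
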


\begin{proof}
It suffices to prove that every locally finite tree $T$ with infinitely many ends and finitely many orbits of vertices is \qi\ to the $3$-regular tree.
To see this, we note that, if $P$ is a path with $k$ edges in a tree such that all vertices of~$P$ have degree~$3$ in that tree, then contracting $P$ to a single vertex leads to a vertex of degree $k+3$.
We fix a root $u$ in the $3$-regular tree $T_3$ and a root $v$ in~$T$.
Let $P_v$ be a path in~$T_3$ starting at~$u$ of length $d(v)-3$, where $d(v)$ denotes the degree of~$v$.
Now let us assume that for a subtree $T'$ of~$T$ that contains~$v$ we have the following:
\begin{enumerate}[(1)]
\item for each vertex $w$ of~$T'$ there is a path $P_w$ in~$T_3$;
\item distinct such paths are disjoint; and
\item $x,y\in V(T')$ are adjacent if and only if $T_3$ has an edge with one end vertex in~$P_x$ and and the other in~$P_w$.
\end{enumerate}
We pick a vertex $w\in T-T'$ that is adjacent to a vertex $x$ of~$T'$.
By~(3) there is a vertex~$a$ in~$T_3$ adjacent to~$P_x$.
Also (3) implies that the vertices in~$T_3$ that lie on the paths $P_y$ for $y\in V(T')$ form a subtree.
Hence, we find a path $P_w$ in~$T_3$ that starts at~$a$, is disjoint from any $P_y$ with $y\in V(T')$ and has length $d(w)-3$.
We end up with a collection of paths in~$T_3$, one for every vertex of~$T$, that partition $V(T_3)$.
Contracting these paths defines a \qiy, since the paths have bounded length, and results in a tree isomorphic to~$T$ due to the requirement on the lengths of the paths~$P_w$.
\end{proof}

The last result of this section is a sharpening of the easy direction of Theorem~\ref{thm_HLMR_ThinEnds}.

\begin{lem}\label{lem_HLMR_ThinEndsImproved}
A connected locally finite \qt\ graph that has a terminal factorisation of only finite graphs is \qi\ to a $3$-regular tree.
\end{lem}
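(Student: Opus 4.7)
The plan is to reduce the claim to Lemma~\ref{lem_QISemiTrees} by constructing an auxiliary tree $T$ that is quasi-isometric to $G$ and satisfies the hypotheses of that lemma: locally finite, with finitely many vertex orbits under its automorphism group, and with infinitely many ends.

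First I would unpack the terminal factorisation $(G_1,\ldots,G_n)$ (all $G_i$ finite) into its iterated tree amalgamation structure. By definition, $G$ is built by a sequence of tree amalgamations of finite adhesion and finite identification, each performed over some semiregular tree, starting from the finite factors. Iterating, the factor copies appearing in the construction naturally become the vertices of a tree $T$, with two such copies adjacent in $T$ whenever they share an adhesion set in the construction. This auxiliary tree $T$ records the ``skeleton'' of the iterated amalgamation.

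Next I would show that $G$ is quasi-isometric to $T$. At each step, because the tree amalgamation has finite adhesion and finite identification, Remark~\ref{rem_PsiQI} tells us that the canonical map $\psi$ from the disjoint union of factor copies to the amalgamated graph is a quasi-isometry, with constants depending only on the identification sizes (which are uniformly bounded, thanks to quasi-transitivity). Since each $G_i$ is finite, hence of bounded diameter, contracting each factor copy to a single vertex is also a quasi-isometry. Composing across all steps of the factorisation yields a quasi-isometry $G \to T$. The tree $T$ is locally finite because both the factors and the adhesion sets are finite, and it inherits finitely many vertex orbits under $\Aut(T)$ from the quasi-transitivity of $G$ (the group action at each level of the factorisation descends to $T$ after the contractions).

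Finally, I would verify that $T$ has infinitely many ends, after which Lemma~\ref{lem_QISemiTrees} immediately implies that $T$, and hence $G$, is quasi-isometric to the $3$-regular tree. I expect this last verification to be the main obstacle: one must argue that a non-trivial iterated tree amalgamation of finite graphs with finite adhesion, performed under quasi-transitivity, cannot degenerate to a tree with fewer than infinitely many ends. The key observation here is that a locally finite quasi-transitive tree has either $0$, $2$, or infinitely many ends, and the two-ended case would force all the semiregular trees appearing in the factorisation to have branching parameters $(2,2)$ at every step, which is incompatible with the graph $G$ actually having the stated factorisation into genuinely infinite-end-producing amalgamations. With $T$ shown to have infinitely many ends, Lemma~\ref{lem_QISemiTrees} applies and the proof concludes.
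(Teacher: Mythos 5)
Your overall strategy is the paper's: collapse the finite factor copies of the iterated \ta\ to single vertices, obtain a locally finite tree with finitely many vertex orbits that is \qi\ to~$G$, and finish with Lemma~\ref{lem_QISemiTrees}. The concrete step that fails as written is the construction of the auxiliary tree: declaring two finest-level factor copies adjacent ``whenever they share an adhesion set'' need not produce a tree. An adhesion set of an outer-level amalgamation $H_1\ast H_2$ is a finite vertex set inside a copy of~$H_1$, and once $H_1$ is itself unpacked into its factorisation, that set may meet several finest-level copies on each side of the separation; joining all of these pairwise (or even joining two copies on one side, which are already connected to each other through the interior of the copy of~$H_1$, to a common copy on the other side) creates cycles. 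So the quotient you describe is a graph \qi\ to~$G$ but not in general a tree, and Lemma~\ref{lem_QISemiTrees} needs an actual tree. The paper's proof exists precisely to handle this: it proceeds by induction on the number of factors, replaces each vertex $u$ of the amalgamating tree $T$ by the tree $T_i$ obtained for the corresponding factor, and replaces each edge $uv\in E(T)$ by a \emph{single} edge $\alpha_i(x^{uv}_u)\alpha_j(x^{uv}_v)$ between the images of one chosen representative vertex of the adhesion set on each side, the representatives being chosen $\Aut(G)$-equivariantly (one choice per orbit of $E(T)$) so that the resulting tree still has finitely many vertex orbits. With that modification the rest of your argument (Remark~\ref{rem_PsiQI}, bounded diameter of the finite factors, local finiteness) goes through.

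On your ``main obstacle'': you are right that Lemma~\ref{lem_QISemiTrees} requires infinitely many ends, but your proposed justification is circular --- ``genuinely infinite-end-producing amalgamations'' is not a hypothesis you are given, and indeed the statement is false without an implicit non-degeneracy assumption (the double ray is a non-trivial \ta\ of two copies of $K_2$ over the $(2,2)$-semiregular tree, hence has a terminal factorisation into finite graphs, and is not \qi\ to the $3$-regular tree). The paper silently elides this point as well; in its applications the lemma is only invoked for graphs already known to have infinitely many ends, where the issue disappears because the number of ends is a quasi-isometry invariant, so the tree produced by the construction automatically has infinitely many ends.
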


\begin{proof}
Let $(G_1,\ldots,G_n)$ be a terminal factorisation of a connected locally finite \qt\ graph~$G$, where all $G_i$ are finite.
First, we proof by induction on~$n$ that $G$ is \qi\ to a tree with at most $n$ orbits on its vertex set.
This suffices to prove the assertion since such trees are \qi\ to a $3$-regular tree by Lemma~\ref{lem_QISemiTrees}.
If $n=2$, then the map $G_1+_T G_2\to T$ that maps the vertices in $G^u_i$ to~$u$ is a \qiy.
So by Remark~\ref{rem_PsiQI} and as the \ta\ of a factorisation is of finite identification, $G$ is \qi\ to~$T$.
Now let $n\geq 3$ and let $(H_1,H_2)$ be a factorisation of~$G$ such that $H_1$ and $H_2$ have terminal factorisations that are subsequences of $(G_1,\ldots, G_n)$.
Note that these are proper subsequences.
So by induction $H_i$ is \qi\ to some tree $T_i$ with finitely many orbits.
Let $\alpha_i\colon H_i\to T_i$ be a \qiy\ and let $T$ be a tree such that $G=H_1\ast_T H_2$.
For each $\Aut(G)$-orbit of $E(T)$ we choose an edge $e=uv$ in it and an edge $x^e_ux^e_v\in E(H_1+H_2)$ with $x^e_u\in H_i^u$ and $x^e_v\in H_j^v$.
We extend the definition of $x^e_u$ and $x^e_v$ to all edges of~$T$ in such a way that it is compatible with the action of $\Aut(G)$.
Now we construct a new tree~$T'$.
For that, we replace in~$T$ each vertex $u$ with a copy $T_i^u$ of~$T_i$ and replace the edge $uv\in E(T)$ by an edge $\alpha_i(x^{uv}_u)\alpha_j(x^{uv}_v)$.
By its construction and as $T$, $T_1$ and $T_2$ are trees, $T'$ is a tree, too.
There are only finitely many orbits on $V(T')$  as this is true for the other three involved trees and as the construction of~$T'$ respects the action of $\Aut(G)$.
Finally, we note that the \qis\ $\alpha_1$ and $\alpha_2$ extend to a \qiy\ $\alpha\colon G\to T'$ as the adhesion sets of $H_1\ast_T H_2$ have bounded diameter.
\end{proof}

\section{Tree amalgamations and \qis}\label{sec_proof1}

In this section, we shall prove our first main results, Theorems~\ref{thm_PW0.2Graph} and~\ref{thm_PW0.3Graph}.
In preparations for that, we prove that we may assume -- up to \qiy~-- that the \ta s that we consider have adhesion~$1$, disjoint adhesion sets and no vertices outside adhesion sets.

\begin{lem}\label{lem_AdhesionGood}
Let $G$ be a locally finite connected \qt\ graph and let $(G_1,G_2)$ be a factorisation of~$G$.
Then there is a locally finite connected \qt\ graph $H$ that has a factorisation $(H_1,H_2)$ such that the following hold.
\begin{enumerate}[{\rm (1)}]
\item\label{itm_AdhesionG} $G$ is \qi\ to~$H$;
\item\label{itm_AdhesionGi} $G_i$ is \qi\ to~$H_i$ for $i=1,2$;
\item\label{itm_AdhesionOne} $H_1\ast H_2$ has adhesion~$1$;
\item\label{itm_AdhesionDisjoint} all adhesion sets of $H_1\ast H_2$ are distinct;
\item\label{itm_AdhesionsCover} the adhesion sets of~$H_i$ cover $H_i$ for $i=1,2$.
\end{enumerate}
\end{lem}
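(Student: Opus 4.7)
I will modify the factorisation in two stages, each preserving the relevant quasi-isometry types.

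\emph{Stage 1} (properties (3) and (4)). For each $i\in\{1,2\}$ and each adhesion set $S^i_k$ of $G_i$, adjoin a new pendant vertex $z^i_k$ with edges to every element of $S^i_k$, and declare $\{z^i_k\}$ to be the new $k$-th adhesion set; call the resulting graph $G_i'$. Since the group action on the adhesion sets is quasi-transitive, these have bounded size, so this is a bounded local modification and $G_i'$ is quasi-isometric to $G_i$. The tree amalgamation $G_1'\ast_T G_2'$, using the same tree $T$ and the trivial bijections between singletons, is quasi-isometric to $G_1\ast_T G_2$: at each edge of~$T$, identifying the sets $S^1_k,\,S^2_\ell$ is replaced by identifying the pendants $z^1_k,\,z^2_\ell$, which is a bounded-diameter change. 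The new adhesion sets are singletons (yielding~(3)) and pairwise distinct (yielding~(4)).

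\emph{Stage 2} (properties (2) and (5)). Define $H_i$ as the graph on the vertex set $\{z^i_k : 1\leq k\leq p_i\}$ with $z^i_k\sim z^i_{k'}$ whenever $d_{G_i'}(z^i_k,z^i_{k'})\leq D$, for a constant $D$ chosen large enough that $H_i$ is connected. Because the hypothesis supplies a group acting quasi-transitively on both~$G_i$ and on its adhesion sets, the distance from any vertex of~$G_i$ to $\bigcup_k S^i_k$ is constant on each vertex-orbit and therefore bounded; so $\bigcup_k S^i_k$ is a quasi-net in~$G_i$, and the pendants $\{z^i_k\}$ form a quasi-net in~$G_i'$. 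This makes the inclusion $V(H_i)\hookrightarrow V(G_i')$ a quasi-isometry, yielding~(2), and the induced action of the group on~$H_i$ remains quasi-transitive. Every vertex of~$H_i$ is an adhesion singleton, so the adhesion sets cover $V(H_i)$, yielding~(5).

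Finally, set $H:=H_1\ast_T H_2$ using the same tree~$T$ and the same singleton pairings as in Stage~1. Then $H$ is obtained from $G_1'\ast_T G_2'$ by restricting each copy of~$G_i'$ to its pendant net and replacing the induced edges with those of~$H_i$; since the pendants form a quasi-net in each copy and the gluings of the tree amalgamation are preserved verbatim, this is a quasi-isometry, yielding~(1). The main obstacle will be verifying that the passage from $G_i'$-copies to $H_i$-copies commutes with the tree amalgamation up to quasi-isometry: this reduces, via Remark~\ref{rem_PsiQI} and the bounded identification sizes, to comparing the lengths of paths in the two amalgamations, where each path decomposes into intra-copy segments (quasi-geodesics, distorted by a bounded factor due to the quasi-net property) and inter-copy jumps through identified pendants (which are preserved exactly).
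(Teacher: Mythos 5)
Your proposal is correct, and its overall shape --- a chain of quasi-isometry-preserving modifications of the factors and of the amalgamation, ending with the ``net of adhesion vertices'' step --- is the same as the paper's; but your device for achieving adhesion~$1$ and disjointness is genuinely different. The paper does this in two separate moves: it first chooses, equivariantly, one representative vertex in each adhesion set and deletes all other matching edges (giving adhesion~$1$), and then splits every vertex lying in $n_x>1$ adhesion sets into $n_x$ mutually adjacent copies to make the adhesion sets disjoint. Your cone-vertex trick does both at once and avoids the somewhat fiddly vertex-duplication step; the price is that you must use (and do use, mostly implicitly) that the adhesion sets have bounded diameter in~$G_i$ and that each vertex lies in only boundedly many adhesion sets --- the latter follows from finite identification together with quasi-transitivity, and it is what keeps $G_i'$ locally finite and makes the passage from identifying the sets $S^1_k,S^2_\ell$ to identifying the two pendants a bounded-distance change on the quotient (cf.\ Remark~\ref{rem_PsiQI}); this is worth stating explicitly. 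Your Stage~2 coincides with the paper's last step ($c_i$-dense adhesion vertices, edges between those at distance at most $2c_i+1$), and the ``commuting with the amalgamation'' issue you flag at the end is resolved exactly as you sketch: the factor quasi-isometries extend to a quasi-isometry $H_1+H_2\to G_1'+G_2'$, and Remark~\ref{rem_PsiQI} transfers this to the contracted graphs. One small addition: you should note that the group actions extend canonically to the pendants, so that $G_i'$, $H_i$ and hence $H$ are again quasi-transitive and $(H_1,H_2)$ is a factorisation in the paper's sense.
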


\begin{proof}
We will modify the \ta\ $G_1\ast_TG_2$ and the involved graphs $G_1,G_2$ so that the resulting graphs and \ta\ satisfy our assertions.
We will do this step by step.
I.\,e., first we modify them so that (\ref{itm_AdhesionG})--(\ref{itm_AdhesionOne}) are are true, then modify the resulting so that (\ref{itm_AdhesionG})--(\ref{itm_AdhesionDisjoint}) are true and finally modify a last time to satisfy all five statements.

First, choose for each edge $uv\in E(T)$ such that $u$ gets replaced by a copy of~$G_1$ when moving to $G_1+G_2$ a vertex~$x_{uv}$ in the adhesion set in~$G_1^u$ belonging to this edge.
We do this so that our choices are invariant under $\Aut(G_1+G_2)$.
Now we delete in $G_1+G_2$ all edges between copies of~$G_1$ and copies of~$G_2$ except for those between $x_{uv}$ and $\varphi_{k,\ell}(x_{uv})$ where $(k,\ell)=c(uv)$.
Since all adhesion sets have bounded diameter, it follows that the identity map on $V(G_1+G_2)$ is a \qiy\ between these two graphs.
Contracting all edges between the copies of~$G_1$ and~$G_2$ leads to a \ta\ $F_1$ of~$G_1$ and~$G_2$ of adhesion~$1$ that is \qi\ to~$G$.
It follows from the choice of the vertices~$x_{uv}$ that $F_1$ is \qt\ and $(G_1,G_2)$ is a factorisation of~$F_1$.
So $F_1,G_1,G_2$ satisfy (\ref{itm_AdhesionG})--(\ref{itm_AdhesionOne}).

Now we replace each vertex~$x$ in~$G_1$ that lies in some adhesion set by copies $x_1,\ldots,x_{n_x}$, where $n_x$ denotes the number of adhesion sets in~$G_1$ that contain~$x$.
Note that $n$ is finite as the \ta\ has finite identification.
We add
\begin{enumerate}[(1)]
\item[$\bullet$] all edges $x_ix_j$ for $1\leq i,j\leq n_x$ with $i\neq j$ and for every $x$ in adhesion sets,
\item[$\bullet$] all edges $vx_i$ for every $1\leq i\leq n_x$ and for every edge $vx$ if $v$ lies in no adhesion set but $x$ lies in an adhesion set, and
\item[$\bullet$] all edges $x_iy_j$ for all $1\leq i\leq n_x$ and $1\leq j\leq n_y$ and edges $xy$ if $x\neq y$ lie in adhesion sets.
\end{enumerate}
Let $G_1'$ be the new graph.
Let $\alpha_1\colon G_1'\to G_1$ be the map that fixes all vertices of~$G_1'$ outside of adhesion sets and maps a vertex $x_i$ to its origin~$x$ otherwise.
Analogously, we define $G_2'$ for the graph~$G_2$ and a map $\alpha_2\colon G_2'\to G_2$.
It is easy to see that $\alpha_1$ and $\alpha_2$ are \qis.
Since $G_1$ and $G_2$ are \qt, so are $G_1'$ and $G_2'$.
Choose the adhesion sets of~$G_i'$ such that they are mapped by $\alpha_i$ to the adhesion sets of~$G_i$, have size~$1$, are disjoint and cover all vertices that get mapped into adhesion sets of~$G_i$ by~$\alpha_i$.
Set $F_2:=G_1'\ast G_2'$.
By construction, $F_2$ is \qt.
Obiously, we can extend the maps $\alpha_1$ and $\alpha_2$ to a map $\alpha\colon G_1'+G_2'\to G_1+G_2$ that is a \qiy.
By Remark~\ref{rem_PsiQI}, we obtain a \qiy\ $F_2\to G_1\ast G_2$.
Thus, $F_2, G_1', G_2'$ satisfy (\ref{itm_AdhesionG})--(\ref{itm_AdhesionDisjoint}).

Let $c_i\geq 0$ such that every vertex of~$G_i'$ has distance at most~$c_i$ to some vertex of the adhesion sets in~$G_i'$.
We define a new graph $H_i$ whose vertex set consists of the vertices of~$G_i'$ that lie in adhesion sets.
It has edges between every two vertices that have distance at most $2c_i+1$ in~$G_i'$.
Note that this condition ensures that $H_i$ is connected.
The identity maps $V(H_1)\to V(G_1')$ and $V(H_2)\to V(G_2')$ are \qis\ that extend to a \qiy\ $H_1+H_2\to G_1'+G_2'$.
Since $\Aut(G_1'), \Aut(G_2')$ acts \qt ly on the adhesion sets in~$G_1'$, in~$G_2'$, respectively, the analogue is true for $H_1$ and $H_2$ since their construction is compatible with the automorphisms.
Also, $H:=H_1\ast H_2$ is \qt\ and \qi\ to $H_1+H_2$ by Remark~\ref{rem_PsiQI}.
Then $H,H_1,H_2$ satisfy (\ref{itm_AdhesionG})--(\ref{itm_AdhesionsCover}).
\end{proof}

\begin{lem}\label{lem_taOneFactorFinite}
Let $G,H$ be connected graphs.
If $G$ is finite and $H$ infinite, then $G\ast H$ is \qi\ to $H\ast\ldots\ast H$, where the number of factors equals the number of adhesion sets of~$G$.
\end{lem}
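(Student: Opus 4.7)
My plan is to exploit the fact that, since $G$ is finite, each copy $G^u$ inside $G\ast_T H$ has bounded diameter, so on large scales the graph $G\ast_T H$ looks like copies of $H$ glued together with $p_1$ copies meeting at each ``junction'' corresponding to a vertex $u\in V_1$ of the underlying $(p_1,p_2)$-semiregular tree $T$. In particular, its QI type should depend only on $H$ and on the number $p_1$ of adhesion sets of~$G$, not on the specific finite graph $G$.

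The first step is to show that if $G^*$ is any other finite connected graph with $p_1$ adhesion sets, then $G\ast_T H$ is \qi\ to $G^*\ast_{T^*}H$. Since both underlying trees are $(p_1,p_2)$-semiregular, they are isomorphic, and I fix such an isomorphism together with a bijection between the $p_1$ adhesion sets of $G$ and those of $G^*$. I then define a map $\phi\colon V(G\ast_T H)\to V(G^*\ast_{T^*}H)$ which on each copy $H^v$ is an identification compatible with the adhesion-set structures, and on each copy $G^u$ collapses all vertices to a fixed vertex of $G^{*u}$. Because $G$ and $G^*$ both have bounded diameter (being finite) and the adhesion sets of the two amalgamations have bounded diameter, $\phi$ is a bounded-distortion map, hence a quasi-isometry.

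The second step is to choose $G^*$ so that $G^*\ast_{T^*}H$ is manifestly the iterated amalgamation ``$H\ast\cdots\ast H$ with $p_1$ factors''. To construct such a $G^*$, I would start from a factorisation of an iterated tree amalgamation of $p_1$ copies of $H$ and iteratively apply Remark~\ref{rem_FinExten}, absorbing one finite ``finite extension'' at each step into a single finite junction graph. Carrying this out inductively, the final junction graph $G^*$ has exactly $p_1$ adhesion sets, and the amalgamation $G^*\ast_{T^*}H$ coincides with the chosen iterated amalgamation of $p_1$ copies of $H$. Combining the two steps yields the QI claimed by the lemma.

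The main obstacle will be executing the second step cleanly, i.e.\ showing that iterating Remark~\ref{rem_FinExten} really does produce a single finite $G^*$ with exactly $p_1$ adhesion sets, and that this number is invariant under the different bracketings of the iterated amalgamation. This is essentially a bookkeeping argument: each absorption step increases the adhesion count in a controlled way, and the inductive base is the trivial case of a single factor $H$.
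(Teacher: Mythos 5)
Your overall intuition---that on large scales $G\ast_T H$ only sees the copies of~$H$ and the number of adhesion sets of the finite junction graph---is the right one, and your Step~1 (collapsing each copy of~$G$ to a point, so that only the number $m=p_1$ of its adhesion sets matters) is essentially the paper's argument. The paper does this in one stroke: it fixes the copies of~$H$ in $G+H$ and maps each copy of~$G$ to a neighbouring vertex in a copy of~$H$; since $G$ is finite this is a quasi-isometry, and the image is identified with $H+\cdots+H$ with $m$ summands, whence the claim via contraction.

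Your Step~2, however, has a genuine gap. Remark~\ref{rem_FinExten} cannot produce the finite junction graph $G^*$ you want: the finite extension of~$G_2$ by~$G_1$ is the graph consisting of a copy of~$G_2$ together with its neighbouring copies of~$G_1$, so it is \emph{infinite} whenever $G_2$ is, and the remark rewrites $G_1\ast_T G_2$ as a tree amalgamation $G_2'\ast_{T'}G_2$ of two infinite graphs---it absorbs the finite factor into the infinite one, the opposite of what you need. Moreover, the asserted identity ``$G^*\ast_{T^*}H$ coincides with the iterated amalgamation of $p_1$ copies of~$H$'' cannot hold for any finite $G^*$: the left-hand side contains infinitely many pairwise disjoint junction copies of~$G^*$ lying in no copy of~$H$, while the right-hand side is built from copies of~$H$ alone. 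At best one gets a quasi-isometry between the two, and proving that quasi-isometry is precisely the content of the lemma, so as written Step~2 is circular. To repair the argument, replace Step~2 by the direct computation: after collapsing each copy of~$G$ onto a chosen neighbour in one of its $m$ adjacent copies of~$H$, verify that the resulting graph is $H+\cdots+H$ with $m$ summands (the $m$ adhesion sets of each former $G$-copy now tie together $m$ copies of~$H$ in a tree-like pattern), and then pass to $H\ast\cdots\ast H$ using Remark~\ref{rem_PsiQI}.
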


\begin{proof}
Let $m$ be the number of adhesion sets in~$G$.
Consider the map that fixes in $G+ H$ all vertices in copies of~$H$ and that maps the vertices in copies of~$G$ to a some neighbour of that copy in a copy of~$H$.
This map is a \qiy\ $G+H\to H+\ldots+H$ with $m$ summands and its image is \qi\ to $H\ast \ldots \ast H$ with $m$ factors.
\end{proof}

\begin{thm}\label{thm_PW_0.1}
Let $F,G,H$ be connected \qt\ locally finite graphs and $T,T'$ semi-regular trees with infinitely many ends.
If $F$ and~$G$ are \qi, then any locally finite \ta s $F\ast_T H$ and $G\ast_{T'} H$ of finite adhesion and finite identification are \qi.
\end{thm}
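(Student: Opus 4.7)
The plan is to reduce the two \ta s to a common normal form via Lemmas~\ref{lem_AdhesionGood} and~\ref{lem_PW_1.1}, and then invoke the hypothesis that $F$ is \qi\ to $G$ together with Lemmas~\ref{lem_PW_1.3} and~\ref{lem_PW_1.4} to identify the two normal forms up to \qiy.

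First, I would apply Lemma~\ref{lem_AdhesionGood} to each of $F \ast_T H$ and $G \ast_{T'} H$, producing \qi\ replacements in which the adhesion sets are singletons, pairwise distinct, and partition each factor. The new factors $F', G', H_1, H_2$ remain locally finite, connected and \qt, with $F'$ being \qi\ to $F$, $G'$ to $G$, and both $H_1$ and $H_2$ \qi\ to $H$. In the reduced \ta\ $F' \ast H_1$, viewed before contraction as the disjoint union $F' + H_1$ with its inter-factor edges, every vertex is incident with a unique inter-factor edge (since each vertex of a factor belongs to exactly one singleton adhesion set), so the families of $F'$-copies and $H_1$-copies satisfy the hypotheses of Lemma~\ref{lem_PW_1.1}. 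That lemma yields a \blec\ between $F' \ast H_1$ and $F' +_{u,v} H_1$ for any base vertices $u \in V(F'), v \in V(H_1)$, and similarly between $G' \ast H_2$ and $G' +_{u',v'} H_2$. It thus suffices to show that $F' +_{u,v} H_1$ is \qi\ to $G' +_{u',v'} H_2$.

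Fix \qis\ $\varphi : F' \to G'$ and $\eta : H_1 \to H_2$; by local finiteness and quasi-transitivity, both $\varphi$ and $\eta$ have uniformly bounded preimages and are coarsely surjective. I would build a hybrid graph $M$ with the tree-of-spaces structure of $F' +_{u,v} H_1$, but in which each copy of $F'$ is replaced by a copy of $G'$, each copy of $H_1$ is replaced by a copy of $H_2$, and the inter-copy edge that originally attached an $H_1$-copy at some vertex $x \in V(F')$ is replaced by an edge attaching the corresponding $H_2$-copy at $\varphi(x) \in V(G')$. Applying $\varphi$ on the $F'$-copies and $\eta$ on the $H_1$-copies produces a \qiy\ $F' +_{u,v} H_1 \to M$, so it remains to build a \qiy\ $M \to G' +_{u',v'} H_2$.

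This last step is the main obstacle. The attachment pattern in $M$ is non-uniform: some vertices of a $G'$-copy carry several attached $H_2$-copies (the $\varphi$-preimages of a single point) while others carry none (points of $G'$ outside $\varphi(V(F'))$), but the uniformly bounded multiplicity and coarse surjectivity of $\varphi$ ensure that within any bounded ball of $G'$ the excesses and deficits are themselves bounded. The plan is to use Lemma~\ref{lem_PW_1.4} to absorb several $H_2$-copies attached at one vertex into a single one (by iteratively replacing wedges $H_2 +_{v,v} \cdots +_{v,v} H_2$ at that vertex by one $H_2$ up to \blec) and to use Lemma~\ref{lem_PW_1.3} to create the missing attachments at deficient vertices, pairing each excess with a nearby deficit. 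After these local redistributions the hypotheses of Lemma~\ref{lem_PW_1.1} are satisfied by the modified $M$ with families of $G'$-copies and $H_2$-copies, and that lemma supplies the required \blec\ to $G' +_{u',v'} H_2$. The subtle point will be to organise the redistribution compatibly with the global tree structure while keeping all bilipschitz and quasi-isometry constants uniform; this is where the bulk of the work will lie.
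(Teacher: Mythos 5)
Your opening moves (normalising the adhesion sets via Lemma~\ref{lem_AdhesionGood} and passing to $F'+_{u,v}H_1$ via Lemma~\ref{lem_PW_1.1}) match the paper, but the core of your argument has a genuine gap. In your hybrid graph $M$, a vertex $y$ of a $G'$-copy carries $|\varphi\inv(y)|$ attached $H_2$-copies, and to apply Lemma~\ref{lem_PW_1.1} you must reduce this multiplicity to exactly~$1$ at \emph{every} vertex. Merging excesses is fine, but ``creating the missing attachments at deficient vertices, pairing each excess with a nearby deficit'' is not something Lemmas~\ref{lem_PW_1.3} or~\ref{lem_PW_1.4} provide: those lemmas only collapse wedges of copies of an amalgam, they do not move attachment points. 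What your redistribution really requires is a bounded-displacement bijection correcting the multiplicity function $y\mapsto|\varphi\inv(y)|$ to the constant function~$1$, i.e.\ essentially a bijection at bounded distance from~$\varphi$. Such a bijection need not exist for a general quasi-isometry (this is the Block--Weinberger/Whyte obstruction: an injective self-quasi-isometry of an amenable graph whose image has density $1/2$ has deficits everywhere that locally outnumber the excesses, so no bounded-distance pairing exists). So the step you defer as ``where the bulk of the work will lie'' is not merely technical; as stated it can fail.

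The paper circumvents exactly this issue by never trying to make $\varphi$ bijective on all of $F$ and $G$: it fixes $B=\varphi(V(F))$ and a subset $A\sub V(F)$ on which $\varphi$ restricts to a bijection $A\to B$, retracts each copy of~$F$ onto its copy of~$A$ by a bounded-displacement (finite-to-one) map~$\alpha$, reroutes the $H$-attachments along~$\alpha$, and then bundles the several $H$-copies accumulating at a point of~$A$ into a single wedge, which is \ble\ to~$H$ by Lemma~\ref{lem_PW_1.3} --- this is why the paper first replaces $H$ by $H\ast H$, so that $H$ is a non-trivial amalgam and Lemma~\ref{lem_PW_1.3} applies; your proposal omits this step too. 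Both $F\ast H$ and $G\ast H$ are then identified with $A\ast H$ and $B\ast H$ respectively, and $A$ and $B$ are \ble\ via~$\varphi$, so one final application of Lemma~\ref{lem_PW_1.1} finishes. Separately, your proposal silently assumes all of $F,G,H$ are infinite: Lemmas~\ref{lem_PW_1.3} and~\ref{lem_PW_1.4} require infinite \PWhom\ graphs, and the theorem permits finite factors, so you still need the paper's case analysis (via Lemmas~\ref{lem_QISemiTrees} and~\ref{lem_taOneFactorFinite}) for $F$ finite and for $H$ finite.
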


\begin{proof}
By Lemma~\ref{lem_AdhesionGood}, we may assume that the adhesion sets have size~$1$, are disjoint and cover $F$, $G$, and $H$.

First, we consider the case that $F$ is finite. Then $G$ is finite, too.
If $H$ is finite, then with the notation used in the definition of \ta s we can map $F^v$ to~$v$ and $H^w$ to~$w$ to obtain a \qiy\ $F+H\to T$.
Similarly, $G+H$ is \qi\ to~$T'$.
Note that finite identification implies that $T$ and $T'$ are locally finite since $F,G$ and~$H$ are finite.
Since $T$ and $T'$ have infinitely many ends, they are \qi\ by Lemma~\ref{lem_QISemiTrees} and hence $F\ast_T H$ and $G\ast_{T'}H$ are \qi.

Now assume that $H$ is infinite, but $F$ is still finite.
By Lemma~\ref{lem_taOneFactorFinite}, $F\ast_T H$ is \qi\ to $H\ast\ldots\ast H$ with $|F|$ factors and $G\ast_{T'}H$ is \qi\ to $H\ast\ldots\ast H$ with $|G|$ factors.
By Lemma~\ref{lem_PW_1.1}, these are \qi\ to $H+_{v,v}\ldots+_{v,v}H$ and $H+_{v,v}\ldots+_{v,v}H$, respectively, for some $v\in V(H)$.
By Lemma~\ref{lem_PW_1.4}, we know that these are \qi, and hence $F\ast_T H$ is \qi\ to $G\ast_{T'} H$.

Let us now consider the case that $F$ and thus also~$G$ are infinite.
We first assume that $H$ is infinite.
We apply Lemma~\ref{lem_PW_1.1} and Lemma~\ref{lem_PW_1.4} to see that $F\ast H$ is \qi\ to $F\ast H\ast H$ and $G\ast H$ is \qi\ to $G\ast H\ast H$.
By replacing $H$ by $H\ast H$, we may assume that $H$ is a non-trivial \ta.

Let $\varphi\colon F\to G$ be a \qiy.
Let $B$ be the image of~$\varphi$ and let $A\sub V(F)$ such that the restriction of~$\varphi$ to~$A$ is a bijection $A\to B$.
Note that the vertices of~$F$, of~$G$ have bounded distance to~$A$, to~$B$, respectively, as $\varphi$ is a \qiy.

We consider the graph $F+_{u,v}H$, where $u\in A$ is the base vertex of~$F$ and $v\in B$ is the base vertex of~$H$.
Let $\alpha$ map vertices inside copies $F^t$ of~$F$ to copies of~$A$ such that $\sup\{d(x,\alpha(x))\mid x\in \bigcup F^t\}$ is finite and such that every base vertex is the image of itself but of no other vertex.
Note that $\alpha$ is finite-to-one as the graphs are locally finite.

Let us modify $F+_{u,v}H$.
We replace every edge that is incident with a vertex $x$ in a copy of~$F$ and a vertex $y$ in a copy of~$H$ by a new edge $\alpha(x)y$.
Since $d(x,\alpha(x))$ is bounded, the new graph $X$ is \qi\ to $F+_{u,v}H$ and thus it is \qi\ to $F\ast H$ by Lemma~\ref{lem_PW_1.1}.
We equip $A$ with a graph structure by adding edges between any two vertices with distance at most $\sup\{2d(x,\alpha(x))\mid x\in\bigcup F^t\}$.
As noted earlier, this results in a connected graph.
Then $A$ with this new metric is \ble\ to $A$ with the metric induced by~$F$.
We change $C$ accordingly, i.\,e.\ we replace every copy of~$F$ by a copy of~$A$ with the new edges, and obtain a graph~$X'$.
Extending the map $\alpha$ by the identity on the copies of~$H$, we obtain a quasi-isometry $F+H\to X'$.

We change $X$ in the following way: for each $a$ in a copy of~$A$ we choose a neighbour $u_a$ outside of the copies of~$A$ and we replace all edges $au$ with $u$ outside of copies of~$A$ by $uu_a$.
Let $Y$ be the resulting graph.
By the choice of $\alpha$, the base vertex of every copy of~$F$ has a unique neighbour outside of its copy of~$F$.
It follows that every component of~$Y$ with all copies of~$A$ deleted is a wedge of finitely many copies of~$H$.
As $H$ is a non-trivial \ta, Lemma~\ref{lem_PW_1.3} shows that each of the components of~$Y$ with the copies of~$A$ removed is \ble\ to~$H$.
So Lemma~\ref{lem_PW_1.1} implies that $F\ast H$ is \qi\ to $A\ast_{u,v} H$.
Analogously, $G\ast H$ is \qi\ to $B\ast_{w,v} H$, where $w\in B$ is the base vertex of~$G$.
Since $A$ and~$B$ are \ble, Lemma~\ref{lem_PW_1.1} implies that $A\ast_{u,v} H$ and $B\ast_{w,v} H$ are \qi\ and so are $F\ast H$ and $G\ast H$.

If $H$ is finite, then Lemma~\ref{lem_taOneFactorFinite} implies that $F\ast H$ is \qi\ to $F\ast\ldots\ast F$ with $|H|$ copies and $G\ast H$ is \qi\ to $G\ast \ldots\ast G$ with $|H|$ copies.
Then Lemmas~\ref{lem_PW_1.1} and~\ref{lem_PW_1.4} imply that $F\ast H$ is \qi\ to $F\ast F$ and $G\ast H$ is \qi\ to $G\ast G$.
The previous case with both $F$ and~$H$ infinite shows that $F\ast F$ is \qi\ to $F\ast G$ which in turn is \qi\ to $G\ast G$, which completes the proof.
\end{proof}

Let us prove Theorem~\ref{thm_PW0.2Graph} with the aid of Theorem~\ref{thm_PW_0.1}.

\begin{proof}[Proof of Theorem~\ref{thm_PW0.2Graph}.]
Let $G=G_1\ast_TG_2$ and $H=G_1\ast_{T'}G_2$.
Since $G_i$ is infinite and some group acts \qt ly on its vertices and its adhesion sets, it follows from~\cite[Proposition 4.6]{HLMR} and the connection between \ta s and \td s as discussed in~\cite[Section 5]{HLMR} that all degrees of~$T$ are infinite.
As the \ta s are non-trivial, both $T$ and~$T'$ have infinitely many ends.
Thus, the assertion follows from Theorem~\ref{thm_PW_0.1}.
\end{proof}

Before we turn our attention to Theorem~\ref{thm_PW0.3Graph}, we prove a small lemma.

\begin{lem}\label{lem_taFinInfin}
Let $G$ be an infinite locally finite \qt\ connected graph and let $(G_1,G_2)$ be a factorisation of~$G$ such that $G_1$ is infinite, $G_2$ is finite and the amalgamating tree is not a star.
Then $G$ is \qi\ to $G_1\ast G_1$.
\end{lem}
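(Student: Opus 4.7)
The argument parallels the second paragraph of the proof of Theorem~\ref{thm_PW_0.1}. Write $G = G_1 \ast_T G_2$ where $T$ is the $(p_1, p_2)$-semiregular amalgamating tree, so that $G_i$ has $p_i$ adhesion sets. The hypothesis that $T$ is not a star forces $p_1, p_2 \geq 2$; in particular, $G_2$ has $m := p_2 \geq 2$ adhesion sets.

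The plan is a four-step chain of quasi-isometries and bilipschitz equivalences. First, apply Lemma~\ref{lem_taOneFactorFinite} (which requires precisely that one factor is finite and the other infinite) to conclude that $G$ is quasi-isometric to an iterated tree amalgamation of $m$ copies of $G_1$. Second, apply Lemma~\ref{lem_PW_1.1} to translate this iterated tree amalgamation into the wedge $G_1 +_{v,v} \cdots +_{v,v} G_1$ of $m$ copies of $G_1$ sharing a fixed base vertex $v \in V(G_1)$. Third, invoke Lemma~\ref{lem_PW_1.4} iteratively: since each application trades one wedge factor for two, it shows that $k$-fold and $(k+1)$-fold wedges are bilipschitz equivalent for every $k \geq 2$, and hence (by induction on $m$) the $m$-fold wedge is bilipschitz equivalent to the $2$-fold wedge $G_1 +_{v,v} G_1$. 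Finally, a second application of Lemma~\ref{lem_PW_1.1} converts this $2$-fold wedge back into a tree amalgamation of the desired form $G_1 \ast G_1$, where by Theorem~\ref{thm_PW0.2Graph} the specific choice of amalgamating tree, adhesion sets and identifications is immaterial up to quasi-isometry.

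The main obstacle lies in verifying the five hypotheses of Lemma~\ref{lem_PW_1.1} at each of its two invocations. Concretely, one must exhibit inside the (iterated) amalgamation a decomposition into disjoint subgraphs, each bilipschitz equivalent to $G_1$ with a uniform constant, such that the remaining edges form a tree structure when these subgraphs are contracted. Quasi-transitivity of $G_1$ guarantees the uniformity of the bilipschitz constants across all of its copies, and the tree-structure condition is inherited directly from the underlying amalgamating tree, so the verification is routine and mirrors the corresponding verification in the proof of Theorem~\ref{thm_PW_0.1}.
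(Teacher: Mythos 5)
Your argument is correct and follows essentially the same route as the paper: the paper's proof likewise first applies Lemma~\ref{lem_taOneFactorFinite} to obtain an $m$-fold amalgamation of copies of $G_1$ with $m\geq 2$, and then collapses it to a $2$-fold one by ``applying Theorem~\ref{thm_PW_0.1} repeatedly'', which unpacks to exactly the Lemma~\ref{lem_PW_1.1}/Lemma~\ref{lem_PW_1.4} wedge argument you spell out. The only small addition I would make is an explicit appeal to Lemma~\ref{lem_AdhesionGood} (adhesion sets of size one, disjoint, covering) before the first use of Lemma~\ref{lem_PW_1.1}, as is done at the start of the proof of Theorem~\ref{thm_PW_0.1}.
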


\begin{proof}
By Lemma~\ref{lem_taOneFactorFinite}, $G$ is \qi\ to $G_1\ast\ldots\ast G_1$ with as many factors as there are adhesion sets in~$G_2$.
As the amalgamating tree is not a star, this number is at least~$2$.
Applying Theorem~\ref{thm_PW_0.1} repeatedly implies that $G$ is \qi\ to $G_1\ast G_1$.
\end{proof}

We are going to prove a slightly more technical version of Theorem~\ref{thm_PW0.3Graph} that implies it trivially.

\begin{thm}\label{thm_PW0.3GraphTechnical}
Let $G$ be a locally finite \qt\ graph with infinitely many ends and let $(G_1,\ldots, G_m)$ be a factorisation of~$G$.
Let $H_1,\ldots, H_n$ be infinite factors in $(G_1,\ldots G_m)$ consisting of one representatives per infinite \qiy\ type.
Then one of the following is true.
\begin{enumerate}[{\rm (1)}]
\item If $H:=H_1\ast,\ldots\ast H_n$ has infinitely many ends, then $G$ is \qi\ to~$H$.
\item If $H$ does not have infinitely many ends, but there exists a finite graph~$H_{\rm fin}$ such that some non-trivial \ta\ $H\ast H_{\rm fin}$ has infinitely many ends, then $H\ast H_{\rm fin}$ is \qi\ to~$G$.
\item $G$ is \qi\ to a $3$-regular tree.
\end{enumerate}
\end{thm}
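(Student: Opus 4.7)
The plan is to reduce the factorisation of $G$ to a single tree amalgamation containing each infinite quasi-isometry representative $H_j$ exactly once together with at most one finite factor, by three successive simplification passes on the factorisation tree, and then to case-analyse the outcome. The quasi-transitive action on each factor and its set of adhesion sets, which is part of the definition of a factorisation, forces every amalgamating tree arising in the structure of~$G$ to have infinitely many ends (by Proposition~4.6 of~\cite{HLMR}, as used in the proof of Theorem~\ref{thm_PW0.2Graph}), so Theorem~\ref{thm_PW_0.1} is available at every internal node of the factorisation tree.

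The first pass is a \emph{canonicalisation}: walking bottom-up through the factorisation tree, I use Theorem~\ref{thm_PW_0.1} at each internal node to replace every infinite factor $G_i$ by the chosen representative $H_j$ of its quasi-isometry class. This produces a graph quasi-isometric to $G$, of the same amalgamation shape, whose infinite leaves are each some $H_j$. The second pass \emph{absorbs finite factors}: at every amalgamation where a finite factor meets an infinite one I apply Lemma~\ref{lem_taFinInfin} when the amalgamating tree is not a star (converting the amalgamation into a double amalgamation of the infinite factor with itself), or Lemma~\ref{lem_taOneFactorFinite} with parameter one when the tree is a star (absorbing the finite factor entirely). The third pass is a \emph{consolidation of duplicates}: the doublings from the previous pass produce iterated amalgamations of the same $H_j$, which I collapse to a single copy by translating into the $+_{u,v}$-construction of Papasoglu and Whyte via Lemma~\ref{lem_PW_1.1}, applying Lemma~\ref{lem_PW_1.4}, and then transferring back through Theorem~\ref{thm_PW_0.1}. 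After the three passes, $G$ is quasi-isometric to a single tree amalgamation containing each $H_j$ exactly once together with at most one surviving finite factor.

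The case split is then short. If $n=0$ no infinite factor is ever encountered, the factorisation is automatically terminal in only finite graphs, and Lemma~\ref{lem_HLMR_ThinEndsImproved} gives case~(3). If $n\geq 1$ and the resulting amalgamation $H=H_1\ast\cdots\ast H_n$ has infinitely many ends we obtain case~(1). Otherwise the reduced amalgamation has at most one end, which, together with the infinitely-many-ends condition on the amalgamating trees, forces $n=1$ and $H_1$ itself to have at most one end; the infinitely many ends of $G$ must then come from a surviving finite factor acting as a branching device. Taking this finite factor as $H_{\rm fin}$ gives case~(2), and the independence of the choice of $H_{\rm fin}$ follows from the fact that any two finite connected graphs are mutually quasi-isometric combined with Theorem~\ref{thm_PW_0.1}. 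The main obstacle will be the finite-factor absorption pass: finite factors may be amalgamated only with other finite factors and never directly with an infinite one, so I would first have to collapse such \emph{finite clusters} into a single tree-like piece (using a variant of Lemma~\ref{lem_HLMR_ThinEndsImproved}) before absorbing it into an adjacent infinite factor. The bookkeeping between the $+_{u,v}$-construction and the general tree amalgamations in the consolidation step is also delicate but follows the same template used in the proof of Theorem~\ref{thm_PW_0.1}.
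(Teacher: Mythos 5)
Your three-pass reduction uses the same toolkit as the paper (Theorem~\ref{thm_PW_0.1}, Lemmas~\ref{lem_taOneFactorFinite} and~\ref{lem_taFinInfin}, the Papasoglu--Whyte Lemmas~\ref{lem_PW_1.1} and~\ref{lem_PW_1.4}, and Lemma~\ref{lem_HLMR_ThinEndsImproved}), and for $n\geq 2$ it tracks the paper's induction on the number of quasi-isometry types reasonably closely. But there is a genuine gap in your consolidation pass, and it is exactly where the theorem has real content, namely $n=1$. Lemma~\ref{lem_PW_1.4} turns $G_1+_{u,v}(G_2+_{v,v}G_2)$ into $G_1+_{u,v}G_2$: it removes a duplicate only while at least two infinite factors remain. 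It can therefore never take you from $H_1\ast H_1$ down to a single copy of~$H_1$, so your claim that the duplicates "collapse to a single copy" fails precisely when only one quasi-isometry type is present. Yet case~(1) for $n=1$ asserts $G$ is quasi-isometric to $H=H_1$ itself, which requires proving $H_1\ast H_1\simeq_{qi}H_1$. This is not a formality: when $H_1$ is one-ended the two sides are \emph{not} quasi-isometric (that is why case~(2) exists), so any proof of $H_1\ast H_1\simeq_{qi}H_1$ must use that $H_1$ has infinitely many ends. The paper does this by invoking Theorem~\ref{thm_HLMR_Splitting} to split $H_1=H^1\ast H^2$ and then applying the doubling lemmas to the pieces, showing $(H^1\ast H^1)\ast(H^2\ast H^2)\simeq_{qi}H^1\ast H^2$ (with separate subcases when $H^1$ or $H^2$ is finite). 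That appeal to the splitting theorem inside a factor is a key idea your proposal does not contain, and without it your final case split ("if $H$ has infinitely many ends we obtain case~(1)") is unjustified for $n=1$.

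A secondary, more repairable point: in your treatment of case~(2) you assume the branching of $G$ is witnessed by "a surviving finite factor", but $G$ may equal $H_1\ast H_1$ with $H_1$ one-ended and no finite factors at all; there you must manufacture the comparison with an arbitrary $H_{\rm fin}$, which is what Lemma~\ref{lem_taFinInfin} (read in the direction $H_{\rm fin}\ast_T H_1\simeq_{qi}H_1\ast H_1$) provides. Your observation that finite factors may form clusters amalgamated only with each other is a fair criticism of the bookkeeping, but it affects the paper's argument equally and is not the essential obstruction; the missing splitting of $H_1$ is.
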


\begin{proof}
We prove the assertion by induction on the number $n$ of \qiy\ types of~$(G_1,\ldots,G_m)$.
If $n=0$, then $G$ is \qi\ to a $3$-regular tree by Lemma~\ref{lem_HLMR_ThinEndsImproved}.
Let $n=1$.
We distinguish the cases whether $(G_1,\ldots,G_m)$ has one or more than one infinite factor.
Let us first assume that $(G_1,\ldots,G_m)$ has only one infinite factor, say $G_1$.
If it has no finite factor, the assertion follows immediately.
So we may assume that $m\geq 2$.
We may assume $G=(\ldots(G_1\ast G_2)\ast\ldots\ast G_m)$.
Repeatedly applying Lemma~\ref{lem_taFinInfin} shows that $G$ is \qi\ to $G_1\ast \ldots\ast G_1$ with at least two factors.
Applying Lemma~\ref{lem_PW_1.1} and then Lemma~\ref{lem_PW_1.4} repeatedly, implies that $G$ is \qi\ to $G_1+_{u,u}G_1$ for the base vertex $u$ of~$G_1$.
Now if $G_1$ does not have infinitely many ends, we obtain analogously that for any finite graph $H_{\rm fin}$ and non-trivial \ta\ $G_1\ast H_{\rm fin}$ we have that $G_1\ast H_{\rm fin}$ is \qi\ to $G_1\ast G_1$, which shows the assertion in this case.
If $G_1$ has infinitely many ends, then it has a factorisation $(G_1^1,G_1^2)$.
If both factors are infinite, we can apply Lemmas~\ref{lem_PW_1.1} and~\ref{lem_PW_1.4} to see that $G_1+_{u,u}G_1$ is \qi\ to $G_1$, since Lemma~\ref{lem_PW_1.4} shows that $(G_1^1+_{u,v}G_1^2)+_{u,u}(G_1^1+_{u,v}G_1^2)$ is \qi\ to $G_1^1+_{u,v}G_1^2$.
If $G_1^1$ is infinite but $G_1^2$ is finite, $G_1$ is \qi to $G_1^1\ast G_1^1$ by Lemma~\ref{lem_taFinInfin}.
But then Lemmas~\ref{lem_PW_1.1} and~\ref{lem_PW_1.4} show that $G_1^1\ast G_1^1$ is \qi\ to $(G_1^1\ast G_1^1)+_{u,u}(G_1^1\ast G_1^1)$, which is \qi\ to $G_1\ast G_1$ by Lemma~\ref{lem_PW_1.1}.
If $G_1^1$ and $G_1^2$ are finite, then $G_1$ is \qi\ to a $3$-regular tree by Lemma~\ref{lem_HLMR_ThinEndsImproved} and so is $G_1\ast G_1$.
So by Lemma~\ref{lem_QISemiTrees}, $G_1$ is \qi\ to $G_1\ast G_1$.
Thus, we have shown the assertion in the case that $(G_1,\ldots,G_m)$ has only one infinite factor.

So let us assume that $(G_1,\ldots,G_m)$ has more than one infinite factor.
Since $n=1$, all of them are in the same \qiy\ class.
By Theorem~\ref{thm_PW_0.1} and Lemma~\ref{lem_taFinInfin}, we may assume that $G=H_1\ast\ldots\ast H_1$.
Lemmas~\ref{lem_PW_1.1} and~\ref{lem_PW_1.4} implies that $G$ is \qi\ to $H_1\ast H_1$.
If $H_1$ is one-ended, let $H_{\rm fin}$ be a finite graph and $T$ be a semi-regular tree with infinitely many ends such that $H_{\rm fin}\ast_T H_1$ has infinitely many ends.
It follows that $H_{\rm fin}\ast_T H_1$ is non-trivial.
By Lemma~\ref{lem_taFinInfin}, $H_{\rm fin}\ast_T H_1$ is \qi\ to $H_1\ast H_1$.
So $H_{\rm fin}\ast_T H_1$ is \qi\ to~$G$.

If $H_1$ has more than one end, then it splits as a \ta\ $H^1\ast H^2$ by Theorem~\ref{thm_HLMR_Splitting}.
If $H^1$ and $H^2$ are finite, then $H=H_1$ is quasi-isometric to a $3$-regular tree and so is~$G$ by Lemma~\ref{lem_HLMR_ThinEndsImproved}.
Hence, $G$ is quasi-isometric to~$H$.
If $H^1$ and~$H^2$ are infinite, then $H_1=H^1\ast H^2$ is \qi\ to $(H^1\ast H^1)\ast (H^2\ast H^2)=H_1\ast H_1$ by Lemmas~\ref{lem_PW_1.1} and~\ref{lem_PW_1.4}.
Theorem~\ref{thm_PW_0.1} implies that $G$ is \qi\ to $G_{\rm fin}\ast H_1$ for any finite $G_{\rm fin}$ such that $G_{\rm fin}\ast H_1$ is non-trivial.
If $H^1$ is finite and $H^2$ is infinite, then Lemma~\ref{lem_taFinInfin} implies that $H_1$ is \qi\ to $H^2\ast H^2$.
So Lemmas~\ref{lem_PW_1.1} and~\ref{lem_PW_1.4} imply that $H_1$ is \qi\ to $H_1\ast H_1$.
As $G$ is \qi\ to $H_1\ast H_1$, it is \qi\ to~$H_1$.

Let us now assume that $n\geq 2$.
For $i=0,\ldots, n$, let $G_i^1,\ldots, G_i^{k_i}$ be the factors of $(G_1,\ldots,G_m)$ that are \qi\ to~$H_i$, where $H_0$ is any finite graph.
Then
\[
G=G_0^1\ast\ldots\ast G_0^{k_0}\ast \ldots\ast G_n^1\ast\ldots\ast G_n^{k_n}.
\]
If $n\geq 3$ or $k_1\geq 2$, then $G':=G_1^1\ast\ldots\ast G_{n-1}^{k_{n-1}}$ has infinitely many ends and by induction it is \qi\ to either $H_1\ast\ldots \ast H_{n-1}$ or $G_{\rm fin}\ast H_1$ for any finite $G_{\rm fin}$ such that $G_{\rm fin}\ast H_1$ is non-trivial.
Lemma~\ref{lem_taFinInfin} shows that we can replace each finite factor by~$H_1$ and thus $G'$ is \qi\ to either $H_1\ast\ldots\ast H_{n-1}$ or $H_1\ast H_1$.
By Theorem~\ref{thm_PW_0.1} we may assume that $G_n^j=H_n$ for every $1\leq j\leq k_n$.
By applying Lemmas~\ref{lem_PW_1.1} and~\ref{lem_PW_1.4} we reduce the multiple factors of~$H_1$ and~$H_n$ to just one each and obtain that $G$ is \qi\ to either $H$ or $H_1\ast H_1\ast\ldots\ast H_n$, where another application of Lemmas~\ref{lem_PW_1.1} and~\ref{lem_PW_1.4} shows that $H_1\ast H_1\ast\ldots\ast H_n$ is \qi\ to~$H$.
\end{proof}

\section{Accessibility and \qis}\label{sec_proof2}

In this section we shall prove Theorem~\ref{thm_PW0.4Graph}.
The first step is to see that if a one-ended connected \qt\ locally finite graph embeds \qi ally into a \ta, then it embeds already \qi ally into one of the factors.

\begin{lem}\label{lem_qiEmbedsIntoFactor}
Let $G$ and~$H$ be connected \qt\ locally finite graphs and let $(G_1,G_2)$ be a factorisation of~$G$.
If $H$ has precisely one end and embeds \qi ally into~$G$, then it embeds \qi ally into either $G_1$ or~$G_2$.
\end{lem}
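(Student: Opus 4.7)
The plan is to exploit the tree $T$ underlying $G=G_1\ast_T G_2$ together with one-endedness of~$H$ to pin $\varphi(H)$ into a bounded neighbourhood of a single copy of a factor. First, by Lemma~\ref{lem_AdhesionGood} I reduce to the case where the amalgamation has adhesion~$1$ and disjoint adhesion sets covering each factor, so every edge $e\in E(T)$ corresponds to a single cut vertex $s_e\in V(G)$ whose removal separates $G$ into two sides $A_e^+,A_e^-$ coming from the two components of~$T-e$. Fix a quasi-isometric embedding $\varphi\colon H\to G$ with constants~$(\gamma,c)$ and set $U_e:=\varphi^{-1}(N_{\gamma+c}(s_e))$. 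Each $U_e$ is finite of uniformly bounded size, and every component of $H-U_e$ is $\varphi$-mapped entirely into one of $A_e^\pm$, because the image of any $H$-edge has length at most $\gamma+c$ in~$G$. Since $H$ is infinite and one-ended, exactly one of $\varphi^{-1}(A_e^+),\varphi^{-1}(A_e^-)$ is infinite, and I orient~$e$ towards that side, producing an orientation $\sigma$ of~$E(T)$.

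Second, I would check that $\sigma$ is coherent: no two edges on a common path of~$T$ point away from each other. Otherwise their infinite sides would be disjoint regions of~$G$ whose $\varphi^{-1}$'s are two disjoint infinite subsets of~$V(H)$ separated by the finite set $U_{e_1}\cup U_{e_2}$, contradicting that $H$ has just one end. A standard tree argument then yields the dichotomy that $\sigma$ either has a unique sink $v_0\in V(T)$ or points consistently to a unique end $\omega$ of~$T$.

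Third, in the sink case the target factor is the one at~$v_0$, say~$G_i$. I would rely on an auxiliary fact proved by the same type of separation argument: for any sequence $y_n\to\infty$ in~$H$, the images $\varphi(y_n)$ accumulate at a single end of~$G$, since otherwise a finite separator in~$G$ would pull back to a finite separator leaving two infinite components of~$V(H)$. At the sink $v_0$ each branch $A_e^{\text{away from }v_0}$ has finite $\varphi^{-1}$, and distinct branches at~$v_0$ are separated in~$G$ by the pair $\{s_e,s_{e'}\}$ of cut vertices at their base; hence a subsequence of such $y_n$ whose images are scattered across infinitely many branches could not converge to a single end. Therefore only finitely many vertices of~$V(H)$ have $\varphi$-image outside $V(G_i^{v_0})$. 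Correcting these finitely many exceptional points by nearest-point projection onto $V(G_i^{v_0})$ yields a quasi-isometric embedding $H\to G_i$: the key metric remark is that under the reduced adhesion, any $G$-path between two vertices of $V(G_i^{v_0})$ that leaves $V(G_i^{v_0})$ must re-enter through the same cut vertex, so the metric induced from~$G$ on $V(G_i^{v_0})$ coincides with that of~$G_i$.

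Finally, the end case I would handle by observing that $\varphi(H)$ then has finite intersection with every copy $V(G_{\cdot}^u)$ (each $v_k$-side of each edge $v_kv_{k+1}$ along the $\omega$-ray has finite $\varphi^{-1}$, and the same is true of each off-ray branch) and is stretched thinly along the $\omega$-ray; consequently $\varphi(H)$, and hence $H$, has at most linear growth, so by standard structural results on locally finite \qt\ graphs of linear growth $H$ would be \qi\ to~$\Z$ and thus two-ended, contradicting that $H$ has exactly one end. The main obstacle I anticipate is the sink case when $v_0$ has infinite degree in~$T$: the branchwise finiteness of $\varphi^{-1}$ does not obviously aggregate to a global finite exceptional set, and it is precisely the single-end-accumulation principle, applied simultaneously across all branches at~$v_0$, that upgrades the branchwise estimate to the required global bound.
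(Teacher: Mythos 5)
Your overall strategy coincides with the paper's: orient each edge of the amalgamation tree $T$ towards the side into which the unique infinite component of $H$ minus a bounded separator is mapped, use one-endedness to show the orientation is coherent, and conclude that it has a unique sink or points to an end of~$T$. The coherence step and the sink/end dichotomy are fine. However, both of your terminal steps have genuine gaps, and the first one asserts something false. In the sink case, the claim that \emph{only finitely many} vertices of~$H$ have image outside $V(G_i^{v_0})$ does not hold. Take $G_1=\Z^2$ with all singleton adhesion sets, $G_2$ a single edge with its two vertices as adhesion sets, and $G=G_1\ast_T G_2$ (each vertex of each copy of~$\Z^2$ is joined by an edge of a copy of~$G_2$ to a vertex of another copy of~$\Z^2$); the map sending each $v\in V(\Z^2)=V(G_1^{v_0})$ to its neighbour across the attached copy of~$G_2$ is a \qi\ embedding whose image is entirely outside $V(G_1^{v_0})$, even though $v_0$ is the sink. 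Your end-accumulation principle is true but cannot rule this out: for any finite separator only finitely many branches at~$v_0$ are split off, so images scattered over infinitely many branches are perfectly compatible with convergence to a single end (indeed $G$ may even be one-ended after the splitting is undone). What is actually needed, and what the paper proves, is that $\varphi(H)$ lies in a \emph{uniformly bounded} neighbourhood of $V(G_i^{v_0})$; this requires a uniform bound~$\Delta$ on how deep into a branch the images of the \emph{finite} components of $H-U_e$ can reach, taken over all adhesion sets simultaneously. Your branchwise statement ``$\varphi\inv$ of each branch is finite'' gives no such bound, and without it the final projection onto $V(G_i^{v_0})$ need not be a \qi\ embedding.

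The ray case has a second gap. From ``$\varphi(H)$ meets every copy $V(G_\cdot^u)$ in a finite set'' you cannot deduce linear growth of~$H$: there is no uniform bound on these finite intersections, and the copies themselves may be infinite, so no growth estimate follows (and hence the appeal to the classification of \qt\ graphs of linear growth has no premise to stand on). The paper's argument here is more direct and is what you should aim for: fix a single vertex $a\in V(H)$; for every edge $e_k$ far enough along the directed ray, $\varphi(a)$ lies on the side of $S_{e_k}$ away from the infinite component, so $a$ lies in $U_{e_k}$ or in a finite component of $H-U_{e_k}$, forcing $\varphi(a)$ to stay within the bounded distance~$\Delta$ of infinitely many adhesion sets $S_{e_k}$ which, by finite identification, eventually leave every finite subset of~$G$ --- a contradiction. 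Note that this argument again uses the uniform constant~$\Delta$ whose absence is the root of both gaps.
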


\begin{proof}
Let $\varphi\colon H\to G$ be a $(\gamma,c)$-\qi\ embedding.
Let $S$ be an adhesion set in~$G$ and let $S'$ be the set of vertices of~$G$ of distance at most $\gamma+c$.
If there are vertices of $\varphi(H)$ in different components of $G-S'$, then their preimages are not connected in $H-\varphi\inv(S')$ by the choice of~$S'$ and as $H$ is one-ended and $S'$ finite, there is only one infinite component $C_S^\infty$ of $H-\varphi\inv(S')$ and only finitely many finite components.
So we find $\Delta_S$ such that the images of all vertices of finite components of $H-\varphi\inv (S')$ have distance at most $\Delta_S$ from~$S$.
Note that since we have only finitely many orbits of adhesion sets, the numbers $\Delta_S$ are globally bounded by some~$\Delta$.

Let $uv$ be an edge of~$T$.
Then $G_i^u\cap G_j^v$ is an adhesion set.
The infinite component of $C_{G_i^u\cap G_j^v}^\infty$ gets mapped into either
\[
G_{T_u}:=\bigcup_{a\in V(T_u)} G_i^a\qquad \text{or}\qquad
G_{T_v}:=\bigcup_{a\in V(T_v)} G_i^a
\]
but not into both, where $T_w$ is the component of $T-uv$ that contains $w$ for $w\in\{u,v\}$.
We orient the edge $uv$ towards $u$ if the infinite component gets mapped into $G_{T_u}$ and we orient it towards $v$ otherwise.
It is easy to see that every vertex has at most one out-going edge and that there is at most one vertex without outgoing edges in this orientation of~$T$.
To see that there is at least one sink, let us suppose that this is not the case.
Then there is a directed ray in the orientation of~$T$.
As $G_1\ast G_2$ has finite identification, every $\varphi(a)$ for $a\in V(H)$ lies eventually outside the adhesion sets on that ray and for every adhesion set on that ray, we find a later one that is disjoint from it.
Obviously, there is an adhesion set~$S$ on that ray separating $\varphi(a)$ from $C_S^\infty$.
But this is impossible as $a$ must lie eventually within the $\Delta$-neighbourhood of all later adhesion sets of that ray.
So $T$ has a unique sink $G_i^x$ such that every vertex of~$H$ gets mapped by~$\varphi$ into the $\Delta$-neighbourhood of~$G_i^x$.
We can easily modify $\varphi$ and obtain a maps $\varphi'$ that maps $H$ \qi ally into~$G_i^x$ with respect to the metric of~$G$.
But since $G_i^x$ has only finitely many orbits of adhesion sets, $\varphi'$ also maps $H$ \qi ally into~$G_i^x$ with the metric of~$G_i^x$.
\end{proof}

Now we are able to prove Theorem~\ref{thm_PW0.4Graph}.

\begin{proof}[Proof of Theorem~\ref{thm_PW0.4Graph}.]
Let us first assume that $H$ satisfies (i) to (iii).
If $G$ and $H$ have infinitely many ends, then it follows directly from Theorem~\ref{thm_PW0.3Graph} that $G$ is \qi\ to~$H$.
If they have two ends, then no factor can be one-ended, so all are finite and hence $G$ and $H$ are \qi\ to the double ray and thus \qi\ to each other.
If they have one end, then each of the two terminal factorisations has at most one one-ended factor and all others are finite.
Moreover, a \ta\ of a one-ended graph and a finite graph is one-ended only if the non-trivial amalgamation tree is a star, i.\,e.\ a tree of diameter at most~$2$.
For such \ta s the finite extension of the one-ended factor by the finite factor is just the \ta\ itself and thus the \ta\ is \qi\ to the one-ended factor by Remark~\ref{rem_FinExten}.
Thus, $G$ is also \qi\ to~$H$ in this case.

Now let us assume that $G$ is \qi\ to~$H$.
Since accessibility and numbers of ends are invariant under quasi-iso\-metries, it only remains to prove (iii).
By Theorem~\ref{thm_HLMR_Access}, there exists a terminal factorisation $(G_1,\ldots, G_n)$ of~$G$.
Then there exist $F_1,\ldots,F_{n-1}$ such that $G=F_{n-1}$ and such that for every $i\leq n-1$ the graph $F_i$ is a \ta\ of finite adhesion with factors in
\[
\{G_j\mid 1\leq j\leq n\}\cup\{F_j\mid 1\leq j<i\}.
\]
Let $H'$ be a factor in a terminal factorisation $(H_1,\ldots,H_m)$ of~$H$.
By Remark~\ref{rem_PsiQI}, $H'$ maps \qi ally into~$H$ and thus into~$G$.
Applying Lemma~\ref{lem_qiEmbedsIntoFactor} recursively, we conclude that for some $G_i$ there is a \qi\ embedding $\varphi\colon H'\to G_i$.
Similarly, $G_i$ embeds \qi ally into some factor $F$ of the terminal factorisation $(H_1,\ldots, H_m)$ of~$H$ by a map $\psi$ with $\psi\circ\varphi=id$.
Since $\psi\circ\varphi=id$, we know that $F$ must be mapped by $\psi\circ\varphi$ into~$H'$.
As both are one-ended, we conclude $F=H'$.
Thus, $H'$ is \qi\ to~$G_i$.
\end{proof}

\section{Quasi-isometries between transitive graphs and Cayley graphs}\label{sec_WoessQ}

Woess~\cite[Problem 1]{Woess-Topo} asked whether there are transitive locally finite graphs that are not \qi\ to some locally finite Cayley graph.
Eskin et al.~\cite{EFW-DLnotTransitive} showed that the Diestel-Leader graphs are examples of transitive graphs that are not \qi\ to some locally finite Cayley graph.
Since the Diestel-Leader graphs are one-ended, the question arises what can be said about Woess' question for graphs that need not have one-ended graphs as building blocks in our \ta\ sense, i.\,e.\ inaccessible graphs.
Dunwoody~\cite{D-AnInaccessibleGraph} constructed an inaccessible locally finite transitive graphs that is another example for a negative answer to Woess' question.
As an application of our previous results, we obtain that one-ended and inaccessible examples are basically the only ones you have to consider when understanding the \qiy\ differences between locally finite transitive graphs and locally finite Cayley graphs.

\begin{thm}\label{thm_WQ}
Let $G$ be a locally finite transitive accessible graph that is not \qi\ to some locally finite Cayley graph.
Then there is a one-ended locally finite transitive graph that is \qi\ to some factor in  a terminal factorisation  of~$G$ and that is not \qi\ to some Cayley graph.
\end{thm}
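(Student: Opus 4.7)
The plan is to argue by contrapositive: assume that every one-ended locally finite transitive graph which is \qi\ to some factor in a terminal factorisation of~$G$ is itself \qi\ to some locally finite Cayley graph, and then deduce that $G$ is \qi\ to a locally finite Cayley graph, contradicting the hypothesis.

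By Theorem~\ref{thm_HLMR_Access}, $G$ admits a terminal factorisation $(G_1,\ldots,G_n)$ in which each $G_i$ is either finite or one-ended. I first dispose of the easy cases. If $G$ is one-ended, then $G$ itself is the unique one-ended factor in its own terminal factorisation and is transitive, so the contrapositive hypothesis immediately contradicts the standing assumption on~$G$. If $G$ has exactly two ends, then $G$ is \qi\ to the double ray, hence to $C(\Z)$, again a contradiction. If $G$ has infinitely many ends but the terminal factorisation contains only finite factors, Lemma~\ref{lem_HLMR_ThinEndsImproved} gives that $G$ is \qi\ to the $3$-regular tree and hence to the Cayley graph of~$F_2$. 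So I may assume that $G$ has infinitely many ends and that the terminal factorisation contains at least one one-ended factor; let $G_{j_1},\ldots,G_{j_r}$ with $r\geq 1$ represent the distinct \qiy\ classes of one-ended factors.

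For each $s\leq r$, choose a one-ended locally finite transitive graph $T_s$ that is \qi\ to~$G_{j_s}$. (Such a $T_s$ always exists: any one-ended locally finite quasi-transitive graph is \qi\ to some one-ended locally finite transitive graph, via a standard blow-up/finite-cover construction that replaces the vertex-orbit structure by a vertex-transitive one while preserving the \qiy\ type.) By the contrapositive hypothesis, $T_s$ is \qi\ to a locally finite Cayley graph $C(\Gamma_s)$ for some finitely generated group $\Gamma_s$, and since the number of ends is a \qiy\ invariant, $\Gamma_s$ is one-ended. Form the free product
\[
\Gamma := \Gamma_1 \ast \Gamma_2 \ast \cdots \ast \Gamma_r,
\]
doubling $\Gamma_1$ in case $r=1$ so that the free product is non-trivial. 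With any finite generating set, $C(\Gamma)$ is a locally finite transitive graph. The Bass--Serre splitting realises $C(\Gamma)$ as an iterated \ta\ of the $C(\Gamma_s)$ with trivial (hence finite) adhesion, which gives a terminal factorisation of $C(\Gamma)$ whose one-ended factors are exactly the $C(\Gamma_s)$. Hence $C(\Gamma)$ is accessible, has infinitely many ends, and its terminal factorisation carries the same set of \qiy\ types of one-ended factors as that of~$G$. Applying Theorem~\ref{thm_PW0.4Graph} now yields that $C(\Gamma)$ is \qi\ to~$G$, contradicting the assumption on~$G$.

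The main obstacle is the step above in which one wants to replace the quasi-transitive factor $G_{j_s}$ by a genuinely transitive graph $T_s$ in the same \qiy\ class, because the contrapositive hypothesis is phrased only for transitive graphs. Once this quasi-transitive-to-transitive passage is in hand, the argument is a clean application of Theorem~\ref{thm_PW0.4Graph}: all other ingredients (existence of the terminal factorisation, accessibility of free products of one-ended groups, matching of the invariants appearing in (i)--(iii)) are direct consequences of the results collected in Section~\ref{sec_Prelim}.
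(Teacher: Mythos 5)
Your proposal is correct and follows essentially the same route as the paper: assume all one-ended factors are \qi\ to Cayley graphs, pass to the free product of the corresponding groups, recognise its Cayley graph as a tree amalgamation whose infinite factors have the same \qiy\ types, and invoke the \qiy-classification theorem (the paper applies Theorem~\ref{thm_PW0.3Graph} directly, which is exactly what the relevant direction of your Theorem~\ref{thm_PW0.4Graph} reduces to, and it keeps the finite factors in the free product rather than discarding them). The quasi-transitive-to-transitive passage you flag as the main obstacle is precisely what the paper handles by citing Kr\"on and M\"oller.
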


\begin{proof}
If $G$ has precisely two ends, then it is \qi\ to the double ray by Theorem~\ref{thm_HLMR_ThinEnds}, which is a locally finite Cayley graph.
So we may assume that $G$ has infinitely many ends.
Let $(G_1,\ldots, G_n)$ be a terminal factorisation of~$G$.
Note that every $G_i$ is \qt\ and thus \qi\ to some transitive locally finite graph, see e.\,g.\ Kr\"on and M\"oller~\cite[Theorem 5.1]{KM-QuasiIsometries}.
Suppose that every $G_i$ is \qi\ to some locally finite Cayley graph~$H_i$.
Then $(G_1,\ldots,G_n)$ and $(H_1,\ldots,H_n)$ have the same \qiy\ types of infinite factors.
Let $\Gamma_i=\left<S_i\mid R_i\right>$ be a group that has $H_i$ as Cayley graph.
Let $\Gamma$ be the free product of all $\Gamma_i$ with presentation $\left<S\mid R\right>$, where $S=\bigcup_{1\leq i\leq n}S_i$ and $R=\bigcup_{1\leq i\leq n}R_i$.
Then the Cayley graph $H$ of~$\Gamma$ with respect to $\left<S\mid R\right>$ is the \ta\ of all~$H_i$, i.\,e.\ $H=(\ldots(H_1\ast H_2)\ast\ldots)\ast H_n$.
By Theorem~\ref{thm_PW0.3Graph}, $G$ is \qi\ to~$H$, which is a contradiction.
\end{proof}

We can also restrict Woess' question to certain classes of graphs and groups.
If this class is invariant under \qis, factorisations and \ta s, then the proof of Theorem~\ref{thm_WQ} stays true for it.

One such class are the hyperbolic graphs and group:
a graph is \emph{hyperbolic} if there is some $\delta\geq 0$ such that for every three vertices $x,y,z$ and every three paths, one between each pair of $\{x,y,z\}$, each of the paths lies in the $\delta$-neighbourhood of the union of the other two paths; and a finitely generated group is \emph{hyperbolic} if it has a locally finite hyperbolic Cayley graph.

It follows from the definition that hyperbolicity is preserved under \qis.
By~\cite[Theorem 7.10]{HLMR}, hyperbolicity is also preserved under factorisations and \ta s.
Additionally, hyperbolic \qt\ locally finite graphs are accessible by~\cite[Theorem 4.3]{H-Accessibility}.
Thus, if there is a hyperbolic locally finite \qt\ graph that shows that Woess' question is false, then there is already a one-ended such graph.

\bibliographystyle{amsplain}
\bibliography{/Users/Matthias/Documents/Bibs/Bibs}

\end{document}